\theoremstyle{plain}
\newtheorem{theorem}{Theorem}[section]
\newtheorem{proposition}[theorem]{Proposition}
\newtheorem{corollary}[theorem]{Corollary}
\newtheorem{lemma}[theorem]{Lemma}
\theoremstyle{definition}
\newtheorem{definition}[theorem]{Definition}
\newtheorem{example}[theorem]{Example}
\newtheorem{remark}[theorem]{Remark}
\theoremstyle{plain}
\newtheorem{theorem*}{Theorem}
\newtheorem*{proposition*}{Proposition}
\newtheorem{corollary*}[theorem*]{Corollary}
\newtheorem*{lemma*}{Lemma}
\theoremstyle{definition}
\newtheorem*{definition*}{Definition}
\newtheorem*{remark*}{Remark}
\newtheorem*{notation*}{Notation}
\newcommand{\CC}{\mathbb{C}}
\newcommand{\PP}{\mathbb{P}}
\newcommand{\ZZ}{\mathbb{Z}}
\newcommand{\NN}{\mathbb{N}}
\newcommand{\OO}{\mathcal{O}}
\newcommand{\FF}{\mathcal{F}}
\newcommand{\EE}{\mathcal{E}}
\newcommand{\LL}{\mathcal{L}}
\newcommand{\C}{\mathcal{C}}
\newcommand{\T}{\mathcal{T}}
\newcommand{\m}{\mathfrak{m}}
\newcommand{\g}{\mathfrak{g}}
\newcommand{\X}{\mathfrak{X}}
\newcommand{\aff}{\mathfrak{aff}}
\newcommand\ii{{\imath}}
\newcommand{\VV}{\mathscr{V}}
\newcommand{\pp}{\mathfrak{p}}
\DeclareMathAlphabet{\mathpzc}{OT1}{pzc}{m}{it}
\newcommand{\spec}{\mathrm{Spec}}
\newcommand{\codim}{\mathrm{codim}}
\newcommand{\depth}{\mathrm{depth}}
\newcommand{\sing}{\mathrm{Sing}}
\newcommand{\ov}{\overline}
\providecommand{\keywords}[1]
{
  \textbf{\textit{Keywords:}} #1
}
\title{Stability of pullbacks of foliations on weighted projective spaces}
\date{}
\author[1]{Javier Gargiulo Acea \thanks{Email: jngargiulo@gmail.com.  The author was fully supported by IMPA and CNPq, Brazil.}}
\affil[1]{\small Departamento de Matem\'atica Aplicada, Universidade Federal Fluminense, Brazil.}
\author[2]{Ariel Molinuevo \thanks{Email: arielmolinuevo@gmail.com.  The author was fully supported by UFRJ, Brazil.}}
\affil[2]{\small Instituto de Matematica, Universidade Federal do Rio de Janeiro, Brazil.}
\author[3]{Federico Quallbrunn \thanks{Email: fquallb@dm.uba.ar. The author was fully supported by CONICET, Argentina.}}
\affil[3]{\small Departamento de Matem\'atica, Universidad CAECE, Argentina.}
\author[4]{Sebastián Velazquez \thanks{Email: sebastian.velazquez@kcl.ac.uk. The author was fully supported by CNPq, Brazil, CONICET, Argentina  and EPSRC, United Kingdom.}}
\affil[4]{\small Department of Mathematics, King's College London, United Kingdom.}
\begin{document}

\maketitle
\begin{abstract} We show a stability-type theorem for foliations on projective spaces which arise as pullbacks of foliations with a split tangent sheaf on weighted projective spaces. As a consequence, we will be able to construct many irreducible components of the moduli spaces of foliations of codimension 1 on $\PP^n$, most of them being previously unknown. This result also provides an alternative and unified proof for the stability of other families of foliations.\\
\end{abstract}

\tableofcontents

\bigskip

{\small \noindent \keywords{Singular projective foliations - Moduli spaces - Weighted projective spaces - Rational pullbacks.}}

\medskip

{\small \noindent {  \textbf{\textit{Mathematics Subject Classification(2010)}}: 14D20; 37F75; 14B10; 32S65; 14M25.}}

\newpage
\section{Introduction}

In the present article we address the problem of classifying foliations on projective spaces by means of studying the geometry of the space of codimension $1$ foliations of a given degree $d\in\ZZ$ on  $\PP^n_\CC$. This is the quasi-projective scheme
\[	\FF^1(\PP^n,d)=\{[\omega]\in \PP H^0(\PP^n,{\Omega}^1_{\PP^n}(d)): \omega \wedge d \omega =0  \,\mbox{ and } \, \codim(\sing(\omega))\geq 2   \},
\]
where $\sing(\omega)=\{x\in\PP^n: \omega(x)=0\}$.
In \cite{jou} is proven that for $n\geq 3$ and $d=2$ there is only one component of this space while for $n\geq 3$ and $d=3$ there are two components. In \cite{celn} a complete classification is found for the case $n\geq 3$ and $d=4$, which consists of six components. Since then, progress has been made through several articles in identifying new components of the space of foliations for different values of $n$ and $d$.

Foliations of pullback-type have played a central role in the classification of codimension one foliations in projective spaces. For instance, Brunella's conjecture states that every foliation on $\PP^n$ ($n\geq 3$) is either the pullback under a rational map of a foliation on a surface or admits invariant algebraic hypersurfaces. Taking this into account, it is only natural that the study of the geometry of the spaces $\FF^1(\PP^n,d)$ around foliations of pullback-type should lead to significant progress towards this issue.

This article started as an effort to understand and extend the main result of \cite{CLE}, which is perhaps the most important work tackling the above matter. There, the authors show that the closure of the set of foliations $\omega$ which can be written as $\omega = F^* \alpha$, where $\alpha$ is a foliation of degree $e$ in $\PP^2$ and $F : \PP^n \dashrightarrow \PP^2$ is a generic rational map defined by polynomials $(F_0,F_1,F_2)$ each of degree $v$, is an irreducible component of the space $\FF^1(\PP^n, v.e)$.
Since then, it remained an open problem in the area whether there exist other irreducible components whose generic points are pullbacks of foliations on other surfaces or more generally other higher dimensional varieties $X$. There has been very little progress towards this  problem, even for the simplest case where $X$ is a weighted projective plane. In this article we give a positive answer to this question when $X$ is a weighted projective space of arbitrary dimension. As a consequence, we are able to construct many new families of irreducible components of the spaces of foliations (see Table 1.1 below), while at the same time giving a unified proof for constructions appearing in probably the most influential works addressing this topic.

Another important work regarding this matter is \cite{fj}, which deals with \emph{split foliations}. We say a foliation $\omega$ is of \emph{split type} if its corresponding tangent sheaf, $\T_{\omega}(U):= \{\X\in \Gamma(U, \T_{\PP^n})$ such that $ i_\X\omega=0\}$ splits as a sum of line bundles $\T_\omega\cong \bigoplus_i \mathcal{L}_i$. There it is shown that under some hypotheses on its singularities, every deformation of a split foliation is again split. As a consequence, the authors are able to deduce that for every irreducible component $\C\subseteq \FF^1(\PP^{m},d)$ whose generic element $\omega$ defines a split foliation with $\codim (\sing(d\omega))\geq 3$ there exists an irreducible component $\C'\subseteq \FF^1(\PP^{n+m},d)$ whose generic element is a pullback of an element in $\C$ under a linear projection $\PP^{n+m}\dashrightarrow \PP^m$.

 In this paper we find components whose general member is a foliation of the form $F^*\alpha$ with $F:\PP^n\dashrightarrow\PP^m(\overline{e})$ a dominant rational map from a projective space to a weighted projective space with weights $\overline{e}=(e_0,\dots,e_m)$ and $\alpha$ is a split foliation. See bellow for full statements. In this way we obtain in \cref{sectionApp} an infinite number of previously unknown components, and generalize the aforementioned result of \cite{fj} and the construction of the \textit{exceptional components} made in \cite{celn} in the cases  $n\geq m+2$ and $n\geq 4$ respectively.

 Of course, our approach gives a new proof of the result of \cite{CLE} in the case $n\geq 4$ and shows further that the moduli spaces are generically reduced along these pullback components.
 Also, regarding logarithmic foliations $\omega=(\prod_{i=0}^{m} F_i) \sum_j \lambda_j \frac{dF_j}{F_j}$ we obtain another proof - although only in the case where $m\leq n+2$ and with some extra hypotheses on the degrees - of the fact proven in \cite{omegar}, \emph{i.e.}, that fixing the degrees $\deg (F_i)=d_i$ of the invariant divisors these logarithmic foliations define an irreducible component of $\FF^1(\PP^{n},\sum d_i)$.

\subsection{Statement of results}

This article deals with codimension $1$ foliations on projective spaces $\PP^n_\CC$ that are the pullback under rational maps of foliations defined on weighted projective spaces. We address the question of its stability under infinitesimal deformations and draw several consequences from it. To explain further the present work we introduce some definitions and notation.

It is \emph{a priori} unclear what a foliation is when the ambient space is singular. At the very least, we should ask a foliation on a singular space $X$ to restrict to a foliation on its maximal non-singular subscheme $X_r$. When $X$ is a normal and complete variety, which is the case we will consider here, $X_r\xrightarrow{j} X$ is an open subscheme whose complement $X\setminus X_r$ is a closed subset of codimension at least $2$. So whatever a codimension $1$ foliation on such an $X$ is, it should determine (up to multiplication by elements in $H^0(X_r,\OO_{X_r}^*)=\CC$) a global section  $\alpha\in H^0(X_r,\Omega^1_{X_r}\otimes \LL)$ for some line bundle $\LL$ on $X_r$ verifying integrability. Being $X$ normal, the restriction of divisors induces an isomorphism $Cl(X)\simeq Pic(X_r)$. This motivates the following definitions:

\begin{definition} Let $X$ be a normal projective variety and $\delta\in Cl(X)$. A \emph{codimension $1$ foliation on $X$ of degree $\delta$} is an element  $[\alpha]\in \PP  H^0(X_r,j_*(\Omega^1_{X_r}\otimes \OO_{X_r}(\delta)))$ satisfying
\begin{enumerate}
	\item $\sing(\alpha)=\{x \in X: \alpha(x)=0\}$ has codimension greater or equal than $2$ and
	\item $\alpha$ is \emph{integrable}, \emph{i.e.}, $\alpha\wedge d\alpha=0$.
\end{enumerate}
\end{definition}

\begin{remark} In order to maintain a clearer notation, we will write $\widehat{\Omega}^1_X(\delta)$ instead of $j_*(\Omega^1_{X_r}\otimes \OO_{X_r}(\delta))$. Observe that this is just a twisted version of the usual sheaf of Zariski differential $1$-forms $\widehat{\Omega}^1_X$. If $Y$ is another $\CC$-scheme, $\widehat{\Omega}^1_{X\times Y| Y}(\delta)$ will stand for the sheaf $(j_Y)_*(\Omega^1_{X_r\times Y| Y}\otimes \OO_{X_r}(\delta))$, where $j_Y:X_r\times Y\to X\times Y$ is the natural inclusion.
With respect to twisted vector fields, we will also use the notation $\T_X(\delta)=j_*(\T_{X_r}\otimes \OO_{X_r}(\delta))$.
\end{remark}

\begin{definition} Let $X$ be a complete normal variety and $\delta\in Cl(X)$.
The \emph{space of codimension $1$ singular foliations} of degree $\delta \in Cl(X)$ on $X$ is the quasi-projective variety
\[
\FF^1(X,\delta)=\{[\alpha]\in \PP H^0(X,\widehat{\Omega}^1_X(\delta)): \alpha \wedge d \alpha =0  \,\mbox{ and } \, \codim(\sing(\alpha))\geq 2   \}.
\]
\end{definition}

\begin{definition}
	Let $X$ be a complete normal variety, $\delta\in\mathrm{Cl}(X)$ and $[\alpha]\in \FF^1(X,\delta)$. Let $\Sigma=\spec(\CC[\varepsilon]/(\varepsilon)^2)$. A \emph{first order deformation} of $\alpha$ is an integrable element $\alpha_\varepsilon\in H^0(X\times \Sigma,\widehat{\Omega}^1_{X\times \Sigma|\Sigma}(\delta))$, modulo multiplication of invertible elements in $\CC[\varepsilon]/(\varepsilon)^2$, restricting to $\alpha$ on the central fiber.   Such an element must be of the form $\alpha_\varepsilon=\alpha + \varepsilon \beta$ for some  $\beta \in H^0(X,\widehat{\Omega}^1_X(\delta))/\langle \alpha \rangle$ such that
	\[
		(\alpha+\varepsilon \beta)\wedge d(\alpha+\varepsilon \beta)=0
	\]
	as sections of $\widehat{\Omega}^3_{X\times \Sigma|\Sigma}(2\delta)$, or equivalently \[\alpha\wedge d\beta + d\alpha\wedge\beta=0\] as sections of $\widehat{\Omega}^3_{X}(2\delta)$. Since the vector space of first order deformations of $\alpha$ identifies with the space $\T_{[\alpha]}\FF^1(X,\delta)$, it seems only fair to refer to the elements $\beta$ above as \emph{Zariski tangent vectors} of $\alpha$.
\end{definition}

\begin{definition}
	A foliation defined by a differential $1$-form $\alpha$ is said to have \emph{split tangent sheaf} if the sheaf of vector fields tangent to the foliation $\T_\alpha(U) = \{v\in \T_X(U) :\ i_v\alpha=0\}$ is isomorphic to a direct sum
	\[
		\T_\alpha \simeq \bigoplus_i \OO_X(\delta_i).
	\]
for some $\delta_i \in Cl(X)$.
\end{definition}
\medskip

Now, let $X=\PP^{m}(\overline{e})$ be a weighted projective space with weight vector $\overline{e}=(e_0,\dots,e_m)\in \NN^{m+1}$.

\begin{definition}
A foliation on $\PP^m(\ov{e})$ with split tangent sheaf is said to have a \emph{non-positive splitting} if each Weil divisor class $\delta_i$ corresponds to a non-positive integer under the usual isomorphism $Cl(\PP^m(\ov{e})) \simeq \ZZ$.
\end{definition}

By \cite[Theorem A]{V} if an integrable $1$-form $\alpha$ in $X$ has split tangent sheaf and the zeroes of $d\alpha$ form a variety of codimension greater than $2$, then any deformation of $\alpha$ has split tangent sheaf also. It is worth mentioning that although the referred result is stated for smooth toric varieties, its proof works just fine in the normal case.  This in turn implies that, in the moduli space of foliations on $X$, the generic member of an irreducible component containing $\alpha$ has split tangent sheaf (with the same splitting).

Recall that every rational map $F:\PP^n\dashrightarrow \PP^m(\ov{e})$ lifts in homogeneous coordinates to a map $F:\CC^{n+1}\setminus \{ 0\} \dashrightarrow \CC^{m+1}\setminus\{0\}$ defined by some unique (up to multiplication by scalars) homogeneous polynomials $F_0,\dots,F_m$ of degree $\deg(F_i)=ke_i$ for some $k\in \NN$ and vanishing simultaneously in codimension greater than $1$. In this case, we say that $F$ is of degree $k$. When no confusion arises, we will use the same notation for both the rational map and its polynomial lifting.
Now let $\delta\in \mathrm{Cl}(X)$ and $\alpha \in \PP H^0(X,\widehat{\Omega}^1_X(\delta))$ such that $\alpha\wedge d\alpha=0$.
For a generic rational map $F$ as above, the corresponding pullback $\omega=F^*(\alpha)$ defines a codimension $1$ foliation on $\PP^n$ of degree $k\delta$.

The main result of this article is the following.

\begin{theorem*}[Main Theorem]\label{elTeo}
	Let $X=\PP^{m}(\overline{e})$ be a weighted projective space, $\delta\in \mathrm{Cl}(X)$ and $\alpha \in \PP H^0(X,\widehat{\Omega}^1_X(\delta))$ defining a foliation with split tangent sheaf with non-positive splitting and such that $d\alpha$ vanishes in codimension greater than $2$.  Let $n\ge m+2$, $F:\PP^n\dashrightarrow X$ be a generic rational map of degree $k$ and $\omega:=F^*\alpha$. If $\eta\in H^0(\PP^n,\Omega^1_{\PP^n}(k\delta))$ defines a Zariski tangent vector of $\omega$ then there is a rational map $G:\PP^n\dashrightarrow X$ of degree $k$ and a form $\beta \in H^0(X,\widehat{\Omega}^1_X(\delta))$ defining a Zariski tangent vector of $\alpha$ such that
	\[
		\omega+\varepsilon \eta = (F+\varepsilon G)^* (\alpha + \varepsilon \beta).
	\]
	as sections of $\Omega^1_{\PP^n\times \Sigma|\Sigma}(k\delta)$, where  $\Sigma=\spec(\CC[\varepsilon]/(\varepsilon)^2)$.
\end{theorem*}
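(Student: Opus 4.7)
The plan is to view the theorem as a surjectivity statement for the infinitesimal parametrization
\[
(G,\beta)\longmapsto F^*\beta+\Theta_G,
\]
where $\Theta_G$ denotes the first-order variation of $F^*\alpha$ in the direction $G$, so that expanding $(F+\varepsilon G)^*(\alpha+\varepsilon\beta)$ modulo $\varepsilon^2$ recovers $\omega+\varepsilon(F^*\beta+\Theta_G)$. The content of the theorem is then that every Zariski tangent vector $\eta$ of $\omega$ lies in the image of this map.

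The key structural input comes from working on the open locus $U\subseteq\PP^n$ where $F$ is defined, smooth as a morphism to $X$, and has image inside $X_r$. Genericity of $F$ together with the bound $n\geq m+2$ guarantees that $\PP^n\setminus U$ has codimension $\geq 2$, so global sections of reflexive sheaves extend from $U$ by Hartogs. On $U$ one has the short exact sequence
\[
0\to\T_{\PP^n/X}\to\T_\omega\to F^*\T_\alpha\to 0,
\]
and, using the non-positive splitting $\T_\alpha\cong\bigoplus_j\OO_X(\delta_j)$ with $\delta_j\leq 0$, explicit lifts $\tilde v_j\in H^0(\PP^n,\T_\omega(-k\delta_j))$ of generators of $\T_\alpha$ are available. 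Contracting $\eta$ against these lifts, and against a frame of $\T_{\PP^n/X}$ coming from Euler-type vector fields along the fibres of $F$, yields the data needed to reconstruct $\beta$ and $G$ respectively.

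The recovery proceeds in two steps. First, the image of $\eta$ in $\Omega^1_{\PP^n/X}\otimes\OO_{\PP^n}(k\delta)$ (its \emph{vertical component}) must be matched by $\Theta_G$ for some $G$; this is feasible because varying $F_i\mapsto F_i+\varepsilon G_i$ affects $F^*\alpha$ precisely along this direction, and the existence of $G$ is controlled by the vanishing of $H^1(\PP^n,\OO_{\PP^n}(\ell))$ for all $\ell$. After subtracting $\Theta_G$ one is left with $\eta'\in H^0(\PP^n,F^*\widehat{\Omega}^1_X(k\delta))$, and the task becomes to produce $\beta\in H^0(X,\widehat{\Omega}^1_X(\delta))$ with $F^*\beta=\eta'$ and $\alpha\wedge d\beta+d\alpha\wedge\beta=0$. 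The descent to a form on $X$ is achieved by identifying $\eta'$ through its contractions with the $\tilde v_j$, whose pushforwards are well-defined thanks to the non-positivity of the $\delta_j$, while integrability of $\beta$ follows from that of $\eta$ upstairs together with the dominance of $F$.

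The main obstacle is precisely this descent from $\PP^n$ to $X$: reconstructing $\beta$ from $F$-related data requires a careful bookkeeping of weighted degrees along the rational map into $X=\PP^m(\ov{e})$, and it is here that the non-positivity of the splitting is used to guarantee that the relevant weighted monomials actually arise from honest global sections on $X$. The hypothesis that $d\alpha$ vanishes in codimension $>2$ enters through the cited result of \cite{V}, ensuring that no splitting-breaking perturbations appear and that the tangent sheaf behaves well under deformation. Finally, the weighted projective structure -- where $\widehat{\Omega}^1_X$ fails to be locally free along $\sing(X)$ -- is handled by working throughout on $X_r$ and extending the conclusions back to $X$ via normality.
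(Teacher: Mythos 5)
Your proposal correctly identifies the target decomposition $\eta=F^*\beta+\Theta_G$, but as written it is a plan rather than a proof, and the plan has a genuine gap: you never actually exploit the hypothesis that $\eta$ is a Zariski tangent vector, \emph{i.e.}, the equation $\omega\wedge d\eta+d\omega\wedge\eta=0$ (you invoke it only at the very end, to get integrability of $\beta$). Without that equation the claimed decomposition is false --- a generic element of $H^0(\PP^n,\Omega^1_{\PP^n}(k\delta))$ is not of the form $F^*\beta+\Theta_G$, by a dimension count --- so any correct argument must extract the structure of $\eta$ from it. The paper does this by restricting the equation to the scheme $\VV_0=V(A_0(F),\dots,A_m(F))$, whose generic point is Kupka (this is where the hypothesis ``$d\alpha$ vanishes in codimension $>2$'' actually enters, not through the splitting-stability result of \cite{V} as you suggest), deducing $\eta|_{\VV_0}\wedge dF_0|_{\VV_0}\wedge\cdots\wedge dF_m|_{\VV_0}=0$, and then applying Saito's division theorem together with the depth estimate of \cref{Lema0} to write $\eta=\sum_i A_i(F)\eta_i+\sum_j C_j\,dF_j$. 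Your step ``the vertical component of $\eta$ must be matched by $\Theta_G$ \dots controlled by the vanishing of $H^1(\PP^n,\OO_{\PP^n}(\ell))$'' is precisely the assertion that requires proof, and cohomology vanishing of line bundles on $\PP^n$ does not deliver it; note also that $\Theta_G$ is not purely vertical, since $\sum_i A_i(F)\,dG_i$ has a nontrivial horizontal part.

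Two further points. First, the existence of global lifts $\tilde v_j\in H^0(\PP^n,\T_\omega(-k\delta_j))$ of the generators of $\T_\alpha$ is asserted but not justified, and even granting them the descent of the horizontal remainder $\eta'=\sum_j B_j\,dF_j$ to a form $\beta$ on $X$ is nontrivial: one must show the coefficients $B_j$ are polynomials in $F_0,\dots,F_m$. The paper proves this (\cref{propPullback}) via a blow-up of the base locus and a factorization theorem from \cite{gmv}, not by pushing forward contractions. Second, the non-positive splitting is used in the paper in an entirely different place: in the iterative step (\cref{propPasoRecur}) it guarantees, through the two-step resolution of $K_0$ given by \cref{propACM}, that syzygies among the $A_i(F)$ of too-low degree are trivial, which is what allows one to peel off a special unfolding at each stage and push the error term into higher powers of $B(F)=\langle F_0,\dots,F_m\rangle$ until it dies for degree reasons. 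This iterative mechanism is the technical heart of the proof and is absent from your outline.
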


In other words, the above Theorem is stating that any first order deformation of $F^*\alpha$ is the pullback of a first order deformation of $\alpha$ by a deformation of $F$. As an immediate consequence, we can construct many irreducible components of the spaces of foliations on $\PP^n$:

\begin{corollary*}\label{elCor} Let $n\geq m+2$ and $\C\subseteq \FF^1(\PP^m(\overline{e}),\delta)$ be a generically reduced irreducible component whose generic element satisfies the hypotheses of \cref{elTeo}. Then for every $k\in \NN$ there exists an irreducible component $PB(n,k,\C)$ of $\FF^1(\PP^n,k\delta)$ whose generic element is a pullback of a foliation in $\C$ under a rational map $\PP^n\dashrightarrow \PP^m(\overline{e})$ of degree $k$. Moreover, the scheme $\FF^1(\PP^n,k\delta)$ is generically reduced along $PB(n,k,\C)$.
\end{corollary*}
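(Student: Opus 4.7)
The plan is to realize $PB(n,k,\C)$ as the closure of the image of the pullback map
\[
\Phi\colon \mathrm{Rat}(n,m,k)\times U \dashrightarrow \FF^1(\PP^n,k\delta),\qquad (F,\alpha)\mapsto F^*\alpha,
\]
where $\mathrm{Rat}(n,m,k)$ is the irreducible smooth parameter space of degree-$k$ rational maps $\PP^n\dashrightarrow\PP^m(\ov{e})$, and $U\subseteq\C$ is an open dense subset on which (i) the hypotheses of the Main Theorem hold (which is an open condition on $\C$ by semi-continuity of the splitting type and of the codimension of $\sing(d\alpha)$), and (ii) $\FF^1(\PP^m(\ov{e}),\delta)$ is smooth. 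Such a $U$ exists because, by the generic reducedness of $\C$, the scheme $\FF^1(\PP^m(\ov{e}),\delta)$ is reduced in a neighborhood of the generic point of $\C$, and over a characteristic-$0$ field a reduced scheme of finite type is generically smooth. In particular $T_\alpha U=T_\alpha\FF^1(\PP^m(\ov{e}),\delta)$ for $\alpha\in U$. Define $PB(n,k,\C):=\overline{\mathrm{image}(\Phi)}$, which is irreducible as the closure of the image of an irreducible variety.

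The key step is to interpret the Main Theorem as a statement about the differential of $\Phi$. Expanding
\[
(F+\varepsilon G)^*(\alpha+\varepsilon \beta)=\omega+\varepsilon\, d\Phi_{(F,\alpha)}(G,\beta),
\]
the Main Theorem says precisely that every Zariski tangent vector $\eta\in T_\omega\FF^1(\PP^n,k\delta)$ of $\omega=F^*\alpha$ lies in the image of $d\Phi_{(F,\alpha)}$. Since that image is contained in $T_\omega PB(n,k,\C)\subseteq T_\omega\FF^1(\PP^n,k\delta)$, we deduce that $T_\omega PB(n,k,\C)=T_\omega\FF^1(\PP^n,k\delta)$ at a generic $\omega$.

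To conclude, generic smoothness of $\Phi$ in characteristic zero implies that the kernel of $d\Phi_{(F,\alpha)}$ at a generic point has dimension equal to the dimension of a generic fibre of $\Phi$, and therefore $\dim T_\omega PB(n,k,\C)=\dim PB(n,k,\C)$. If $\C'\subseteq\FF^1(\PP^n,k\delta)$ denotes any irreducible component containing $PB(n,k,\C)$, the chain
\[
\dim PB(n,k,\C)\le\dim\C'\le\dim T_\omega\C'\le\dim T_\omega\FF^1(\PP^n,k\delta)=\dim PB(n,k,\C)
\]
forces equality throughout. Hence $\C'=PB(n,k,\C)$, showing it is an irreducible component, and $\omega$ is a smooth point of $\FF^1(\PP^n,k\delta)$, which yields the generic reducedness of the scheme along $PB(n,k,\C)$.

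The main technical obstacle I foresee is ensuring that $U$ is nonempty, i.e.\ that the conditions of the Main Theorem (in particular, the split type with non-positive splitting and the codimension bound on $\sing(d\alpha)$) propagate from a single point to an open subset of $\C$. This should follow from standard semi-continuity statements for coherent sheaves and singular loci, but deserves a careful check in the weighted setting; once established, the rest of the argument is a formal manipulation of tangent spaces and dimensions.
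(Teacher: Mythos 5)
Your argument is correct and is precisely the standard tangent-space deduction that the paper leaves implicit (the corollary is stated there as an ``immediate consequence'' of the Main Theorem, with no written proof): the Main Theorem identifies $T_{[\omega]}\FF^1(\PP^n,k\delta)$ with the image of the differential of the pullback map, and generic reducedness of $\C$ is used exactly where you use it, to guarantee that the $\beta$ produced by the theorem is tangent to $\C$. The only point you flag as a worry --- that the hypotheses propagate to a dense open subset of $\C$ --- is handled by the openness of $\codim(\sing(d\alpha))\geq 3$ together with the cited result that split tangent sheaf (with fixed splitting) persists under deformation, so your proof is complete as written.
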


In the case where the foliation on the base is induced by the infinitesimal action of a rigid Lie algebra $\g\subseteq H^0(\PP^m,\T_{\PP^m})$, we will combine the Corollary above with \cite[Theorem 3]{fj} as follows.

\begin{corollary*} Let $n\geq m+2$ and $\g\subseteq H^0(\PP^m,\T_{\PP^m})$ a rigid subalgebra of dimension $m-1$ such that $\exp(\g)$ acts with trivial stabilizers in codimension one. Let $\omega(\g)$ be the differential $1$-form defining the codimension $1$ foliation induced by this action. Suppose that $d\omega(\g)$ vanishes in codimension greater than $2$. Then the variety $PB(n,k,\g)\subseteq \FF^1(\PP^n,k(m+1))$ whose generic element is a pullback of $\omega(\g)$ under a rational map $\PP^n\dashrightarrow \PP^m$ of degree $k$ is an irreducible component of $\FF^1(\PP^n,k(m+1))$.
\end{corollary*}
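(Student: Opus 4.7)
The plan is to apply the preceding Corollary to a suitable irreducible component $\C$ of $\FF^1(\PP^m,m+1)$ furnished by \cite[Theorem 3]{fj}.

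First, I would invoke \cite[Theorem 3]{fj}: the assumptions that $\g$ is rigid of dimension $m-1$ and that $\exp(\g)$ acts with trivial stabilizers in codimension one force $\omega(\g)$ to lie in a generically reduced irreducible component $\C\subseteq\FF^1(\PP^m,m+1)$ whose general member is again of the form $\omega(\g')$ for some rigid Lie subalgebra $\g'$ conjugate to $\g$. The relevant degree is indeed $m+1$, because $\T_{\omega(\g)}\cong \g\otimes\OO_{\PP^m}\cong\OO_{\PP^m}^{m-1}$ is trivial and the Euler sequence then forces the twist of $\omega(\g)$ to be $m+1$.

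Next, I would check that $\C$ verifies the hypotheses of the preceding Corollary: the tangent sheaf of every $\omega(\g')$ as above is trivial of rank $m-1$, so the splitting tuple is identically zero and in particular non-positive; and the condition $\codim\sing(d\alpha)\geq 3$ is open on $\FF^1(\PP^m,m+1)$, holds at $\omega(\g)\in\C$ by assumption, and therefore holds on a Zariski-open dense subset of $\C$. Applying the preceding Corollary to $\C$ then yields an irreducible component $PB(n,k,\C)\subseteq \FF^1(\PP^n,k(m+1))$ for each $k\in\NN$, and since every $\mathrm{PGL}(m+1)$-conjugate of $\omega(\g)$ arises as a pullback of $\omega(\g)$ itself after precomposing the rational map with the relevant linear automorphism of $\PP^m$, the variety $PB(n,k,\g)$ described in the statement coincides with $PB(n,k,\C)$. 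The critical external input is \cite[Theorem 3]{fj}: once that is granted, the main obstacle — producing a generically reduced component containing $\omega(\g)$ — is already resolved, and what remains is just an openness argument together with the preceding Corollary.
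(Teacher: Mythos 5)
Your proposal is correct and follows essentially the same route as the paper, which states this corollary as a direct combination of \cite[Theorem 3]{fj} (its \cref{teoCP}) with \cref{elCor}: the rigidity hypothesis yields the generically reduced component $\overline{Aut(\PP^m)\cdot\FF(\g)}$, the trivial tangent sheaf gives the non-positive splitting, and precomposition with automorphisms identifies $PB(n,k,\C)$ with $PB(n,k,\g)$. Your write-up merely makes explicit the openness of the condition on $\sing(d\alpha)$ and the orbit identification, both of which the paper leaves implicit.
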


\begin{table}[H]
\begin{center}
\begin{tabular}{| m{2.4cm} | m{0.8cm} | m{1.3cm} |m{4cm} | m{2cm} |}
\hline
Components & $n\ge$  & d & Generic element & Reference  \\
\hline
   $PB(n,k,\bar{e},\ell)$       &   $4$ & $k\ell$  & $\omega = F^*(\alpha)$,\hfill\ \linebreak  $F:\PP^n \dashrightarrow \PP^2(\bar{e})$ of degree k, $[\alpha] \in \PP H^0(\PP^2(\bar{e}),\widehat{\Omega}^1(\ell))$  and $ \ell$  as in \cref{condicionl}.
  & \cref{pullbackp2p} \\
  \hline
   $PBTM(n,s,\overline{e},\ell)$       &   $5$ & $ks$  & $\omega = F^*(\alpha)$,  $F:\PP^n \dashrightarrow \PP^3$ of degree k, $[\alpha]$ in the gen. reduced irred. comp. $TM_s(e_0,e_1,e_2,\ell)\subseteq \FF^1(\PP^3,s)$.
  & \cref{corPBTM} \\
    \hline
    $\mathcal{L}og(n,\bar{e})$     &$m+2$  & $\sum e_i$  & $\omega= (\prod F_i) \sum \lambda_j  dF_j / F_j$, $\deg(F_i)=e_i$, $\sum \lambda_i e_i=0$ and  $\bar{e}$ as in \cref{logcorolario}. & \cref{logexample} \\
      \hline
             $\mathcal{E}(n,k)$  &  5 &  $4k$ &  $\omega = F^*(\omega(\mathfrak{g}))$,\hfill\ \linebreak $F:\PP^n\dashrightarrow \PP^3$ of degree $k$ and $\mathfrak{g} = \mathfrak{aff}(\CC)$.  & \cref{expgeneral} \\
                  \hline
                        $PB(n,k,\mathfrak{g}(m))$ & $m+2$ & $k(m+1)$ & $\omega = F^*(\omega(\mathfrak{g}))$,\hfill\ \linebreak $F:\PP^n\dashrightarrow \PP^m$ of degree $k$ and $\mathfrak{g(m)} = \langle X, Y_1,\dots,Y_{m-1}\rangle$ with $[X,Y_i]= -2i Y_i$.  & \cref{pullbackgm}\\
         \hline
           $PB(n,k,\mathfrak{g}_6)$ & 8 & $7k$ & $\omega = F^*(\omega(\mathfrak{g}))$,\hfill\  \linebreak$F:\PP^n\dashrightarrow \PP^6$ of degree $k$, and $\mathfrak{g_6} = \langle X, Y_1,\dots,Y_{5}\rangle$ with $[X,Y_i]= -2i Y_i$, $[Y_1,Y_j] = Y_{j+1}$.   & \cref{pullbackg67} \\
         \hline
                  $PB(n,k,\mathfrak{g}_7)$ &  9 & $8k$ & $\omega = F^*(\omega(\mathfrak{g}))$,\hfill\ \linebreak $F:\PP^n\dashrightarrow \PP^4$ of degree $k$, and $\mathfrak{g_7} = \langle X, Y_1,\dots,Y_{6}\rangle$ with $[X,Y_i]= -2i Y_i$, $[Y_1,Y_j] = Y_{j+1}$, $[Y_2,Y_3] = -(5/2) Y_5$.  &\cref{pullbackg67}\\
         \hline
      \end{tabular}
      \caption{\small{Irreducible components of $\FF^1(\PP^n,d)$.}}
      \end{center}
\end{table}
We summarize in the table above all the families of irreducible components that we were able to derive from this work. Besides logarithmic foliations and the special cases mentioned in the introduction, all these stable families were previously unknown.

\section{Preliminaries}

Let us now briefly introduce some basic facts and conventions on weighted projective spaces. For complete proofs the reader is referred to \cite{DOLGACHEV} and \cite{CLS}. Along the rest of the article, $X$ will stand for the variety $\PP^m(\ov{e})$ with weight vector $\ov{e}=(e_0,\dots,e_m)$. This space can be presented as the quotient space of the action $\CC^*\curvearrowright \CC^{m+1}\setminus \{0\}$ defined by
\begin{equation}\label{accion}
  t\cdot (x_0,\dots,x_m)=(t^{e_0}x_0,\dots,t^{e_m}x_m).
\end{equation}
We will denote by $\pi_{\ov{e}}:\CC^{m+1}\setminus \{0\} \to \PP^m(\ov{e})$ the quotient morphism.

Let us fix some notation that we will use in the rest of the article: we will denote by $S$ the homogeneous coordinate ring of $\PP^n$,  $S=\CC[z_0,\ldots,z_n]$, and with a subscript $\ov{e}$ the corresponding graded polynomial ring of $\PP^m(\ov{e})$, that is $S_{\ov{e}}=\CC[x_0,\ldots,x_m]$. If no confusion arises, we will also denote with a subscript the homogeneous component of the given degree of $S_{\ov{e}}$ or $S$.

The tangent space to the fibers of $\pi_{\ov{e}}$ is pointwise spanned by the \emph{radial vector field}
\[ R_{\ov{e}}=\sum_{i=0}^m e_i x_i\frac{\partial}{\partial x_i}. \]
At the level of functions, the corresponding action of \cref{accion} diagonalizes into a $\ZZ$-grading of $S_{\ov{e}}=\CC[x_0,\dots,x_m]$ such that $\deg(x_i)=e_i$. This naturally induces a $\ZZ$-grading for both $\Omega^\bullet_{S_{\ov{e}}}$ and its dual $\bigwedge^\bullet Der(S_{\ov{e}},S_{\ov{e}})$ where $dx_i$ and $\frac{\partial}{\partial x_i}$ are of degree $e_i$ and $-e_i$ respectively.

It is a standard fact that the class group $Cl(X)$ is generated by the classes of the divisors $D_i=\pi_{\ov{e}}(\{x_i=0\})$. Moreover, the application $\deg:Cl(X)\to \ZZ$ such that $[\sum a_i D_i]\mapsto \sum a_ie_i$  defines an isomorphism $Cl(X)\simeq \ZZ$. Under this identification, the classes of Cartier divisors (\emph{i.e.}, the Picard group of $X$) identify with the free subgroup $\langle lcm(e_0,\dots,e_m)\rangle \subseteq \ZZ$. One of the good features of this grading is that the pullback under $\pi_{\ov{e}}$ induces an isomorphism
\[
H^0(X,\OO_X(\delta))\simeq (S_{\ov{e}})_\delta.
\]
We will often make use of both the Euler sequence
\begin{equation} \label{1}
0 \longrightarrow \widehat{\Omega}_{X}^{1} \longrightarrow \bigoplus_{i=0}^{m} \OO_X(-e_i) \longrightarrow \OO_X  \longrightarrow 0
\end{equation}
and its dual
\begin{equation}\label{2}
0 \longrightarrow \OO_X \longrightarrow \OO_X(e_i) \longrightarrow \T_X \longrightarrow 0
\end{equation}
(see for instance \cite{COXBATYREV}) in order to deal with the sheaves $\widehat{\Omega}^1_X(\delta)$ and $\T_X(\delta)$ as follows. Recall that the second arrow in the first sequence corresponds to the dot product with the element $(e_0 x_0,\dots,e_m x_m)$. For an element $\delta \in Cl(X)$, one can restrict \cref{1} to $X_r$ and tensorize with $\OO_{X_r}(\delta)$ and get a short exact sequence
\[
0 \longrightarrow \Omega_{X_r}^{1}\otimes\OO_{X_r}(\delta) \longrightarrow \bigoplus_{i=0}^{m} \OO_{X_r}(\delta-e_i) \longrightarrow \OO_{X_r}(\delta)  \longrightarrow 0.
\]
	Just as in the case of the projective space, taking global sections we see that  every $\omega \in H^0(X_r, \Omega^1_{X_r}\otimes \OO_{X_r}(\delta))$ (and therefore every global section of $\widehat{\Omega}^1_X(\delta)$) can be described in homogeneous coordinates as a polynomial differential form
 \begin{equation}\label{omega2}
\omega = \sum_{i=0}^m A_i(x)dx_i
\end{equation}
where $A_i\in (S_{\ov{e}})_{\delta-e_i}$ and $\imath_{R_{\ov{e}}}(\sum_{i=0}^m A_i(z)dz_i)=0$. In fact, the differential form above is just the pullback $\pi_{\ov{e}}^*\omega$. Since the same reasoning applies to higher differential forms, it is only natural to define the following.
\begin{definition} We will denote by
$$\widehat{\Omega}^\bullet_{S_{\ov{e}}}=\bigoplus_{\delta\in\ZZ} H^0(X,j_*(\Omega^\bullet_{X_r}\otimes O_{X_r}(\delta)))\subseteq \Omega^\bullet_{S_{\ov{e}}}$$
the \emph{graded module of differential forms descending to $X$}, \emph{i.e.}, the elements $\omega$ such that $\ii_{R_{\ov{e}}}(\omega)=0$.
\end{definition}
Regarding \cref{2}, with a similar argument we can see that every element $\X\in H^0(X,T_X(\delta))$ can be written as
\[ \X=\sum_{i=0}^m B_i(x)\frac{\partial}{\partial x_i} \]
where $B_i$ is homogeneous of degree $\delta+e_i$. This description is unique up to addition of multiples of the radial vector field.

\subsection{Idea of the proof}
Since the proof of the Main Theorem is quite technical, we will now give a quick overview of the ideas therein.

Let us start by spelling out what \cref{elTeo} is saying in terms of homogeneous coordinates of $\PP^n$. Let $F:\PP^n\dashrightarrow X$ be a generic rational map induced by some homogeneous polynomials $F_0,\dots,F_m$ of degree $\deg(F_i)=ke_i$, $\alpha=\sum_{i=0}^m A_i(x) dx_i\in H^0(X,\widehat{\Omega}^1_X(\delta))$ and $\omega=F^*\alpha$.
If $G=(G_0,\dots,G_m)$ is another homogeneous tuple of the same degrees, writing down the expression for $(F+\varepsilon G)^*\alpha$ in the ring $\CC[\varepsilon]/(\varepsilon)^2 [z_0,\dots,z_n]$ we get
    \begin{eqnarray*}
    	(F+\varepsilon G)^* \alpha = \sum_{i=0}^n A_i(F+\varepsilon G) d(F_i+\varepsilon G_i)=\\
    	\sum_{i=0}^n \left(A_i(F)+ \varepsilon \left(\sum_{j=0}^m \frac{\partial A_i}{\partial x_j}(F)G_j \right)\right) (dF_i+\varepsilon dG_i)=\\
    	\omega + \varepsilon\left( \sum_{i=0}^n A_i(F) dG_i + \sum_{i=0}^n \left(\sum_{j=0}^m \frac{\partial A_i}{\partial x_j}(F) G_j \right)dF_i\right).
    \end{eqnarray*}
 Since $(F+\varepsilon G)^*(\varepsilon\beta) = \varepsilon F^*\beta$, our main theorem is claiming that every Zariski tangent vector $\eta$ of $\omega$ can be decomposed as $\eta=\eta_1+\eta_2$, with $\eta_1=F^*\beta$ and
    \[
    	\eta_2 = \sum_{i=0}^n A_i(F) dG_i + \sum_{i=0}^n \left(\sum_{j=0}^m \frac{\partial A_i}{\partial x_j}(F) G_j \right)dF_i.
    \]
    Observe that both $\eta_1$ and $\eta_2$ here induce deformations of $\omega$, so we are actually decomposing the Zariski tangent space at $\omega$ into a direct sum of two subspaces.

In \cite[Theorem 4.14]{gmv} is proven that a Zariski tangent vector as $\eta_2$ above actually give rise to \emph{infinitesimal unfoldings} in the sense of \cite{moli}. As we will refer often to this particular type of deformations we make the following \emph{ad hoc} definition.

\begin{definition}\label{defSUnf}
	Let $\omega= F^*\alpha = \sum_i A_i(F) dF_i$ be a pullback form as in the main theorem.
	We say a a deformation of $\omega$ is a \emph{special unfolding} if it is given by a Zariski tangent vector of the form
	\[
    	\eta_2 = \sum_{i=0}^n A_i(F) dG_i + \sum_{i=0}^n \left(\sum_{j=0}^m \frac{\partial A_i}{\partial x_j}(F) G_j \right)dF_i.
    \]
    If no confusion arises, we will refer as \emph{special unfolding} to both the deformation of $\omega$ and the Zariski tangent vector inducing such deformation.
\end{definition}
 In \cite[Theorem 4.14]{gmv} is proven that, under suitable hypotheses, every unfolding of $\omega$ comes from a form like $\eta_2$ above, so sometimes we can conclude that every unfolding is special in the above sense. However we will not need results of this type, as we will prove that every deformation of $F^*\alpha$ is decomposed as a sum of the pullback by $F$ of a deformation of $\alpha$ and a deformation given by a special unfolding.

The first step in establishing the main theorem is the realization of the following fact.

\begin{proposition}\label{propPullback}
	Let $X$, $\omega$, $\alpha$ and $F:\PP^n\dashrightarrow X$ be as in \cref{elTeo}. Let $\eta$ be a Zariski tangent vector of $\omega$.
	If $\eta$ can be written as
	\[
		\eta = \sum_{j=0}^m B_j(z) dF_j
	\]
	then there is a Zariski tangent vector $\beta$ of $\alpha$ such that $\eta=F^*\beta$.
\end{proposition}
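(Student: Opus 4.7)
The plan is to show that each coefficient $B_j\in S$ in the expression $\eta = \sum_j B_j\,dF_j$ lies in the subring $\phi(S_{\ov{e}}) = \CC[F_0,\dots,F_m]\subset S$, i.e.\ $B_j = \tilde B_j(F)$ for some $\tilde B_j\in(S_{\ov{e}})_{\delta-e_j}$. Granting this, the form $\beta = \sum_j\tilde B_j\,dx_j$ will descend to $\widehat{\Omega}^1_X(\delta)$ (by an Euler relation transferred via injectivity of $\phi$), clearly satisfy $F^*\beta = \eta$, and inherit the Zariski tangent vector condition from $\eta$ by the injectivity of $F^*$ on $3$-forms.

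First I would contract the integrability relation $\omega\wedge d\eta + d\omega\wedge\eta = 0$ against an arbitrary ``vertical'' vector field $v\in T_F$, i.e.\ one with $v(F_i)=0$ for every $i$. Since $i_v\omega = i_v\eta = 0$ and $i_v d\omega = i_v F^*(d\alpha) = 0$ (each $1$-form factor of $d\omega$ is itself a pullback by $F$), the relation collapses to $\omega\wedge i_v(d\eta) = 0$. Expanding $i_v(d\eta) = \sum_j v(B_j)\,dF_j$ and using that the $dF_j$'s are $\CC$-linearly independent at a generic point (for generic $F$), the tuple $(v(B_0),\dots,v(B_m))$ must be pointwise proportional to $(A_0(F),\dots,A_m(F))$. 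This produces an $\OO_{\PP^n}$-linear, degree-zero map $\lambda\colon T_F\to\OO_{\PP^n}$ satisfying $v(B_j)=\lambda(v)\,A_j(F)$ for every $j$.

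The hard part will be to prove $\lambda = 0$, equivalently the vanishing of the degree-zero component of $H^0(\PP^n, T_F^\vee)$. From the conormal sequence $0\to F^*\widehat{\Omega}^1_X\to\Omega^1_{\PP^n}\to T_F^\vee\to 0$ (on the locus where $F$ is a submersion) and $H^0(\PP^n,\Omega^1_{\PP^n})=0$, the long exact cohomology sequence gives an injection $H^0(T_F^\vee)_0 \hookrightarrow H^1(F^*\widehat{\Omega}^1_X)_0$ whose image lies in the kernel of $H^1(F^*\widehat{\Omega}^1_X)_0 \to H^1(\Omega^1_{\PP^n})_0$. Pulling back the Euler sequence of $X$ yields $H^1(F^*\widehat{\Omega}^1_X)_0\cong\CC$, and comparing extensions with the Euler sequence on $\PP^n$ the natural map $H^1(F^*\widehat{\Omega}^1_X)_0 \to H^1(\Omega^1_{\PP^n})_0\cong\CC$ is multiplication by $k\neq 0$, hence injective. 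So $H^0(T_F^\vee)_0 = 0$ and $\lambda=0$. The hypothesis $n\geq m+2$ enters in ensuring that the fibers of $F$ have dimension at least two (needed for the conormal sequence to behave cleanly in cohomology), while the non-positive splitting of $T_\alpha$ controls the twists involved.

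Once $\lambda = 0$, every $v\in T_F$ annihilates each $B_j$, so $B_j$ is constant on the generic fibers of $F$. Dominance of $F$ and generic irreducibility of its fibers yield $B_j = \tilde B_j\circ F$ for some rational function $\tilde B_j$ on $X$, and a degree count forces $\tilde B_j\in(S_{\ov{e}})_{\delta-e_j}$. The Euler relation $\sum e_j x_j\tilde B_j = 0$ needed for $\beta = \sum_j \tilde B_j\,dx_j$ to descend to $\widehat{\Omega}^1_X(\delta)$ follows from the automatic relation $\sum e_j F_j B_j = 0$ (equivalent to $\iota_R\eta = 0$) together with injectivity of $\phi\colon S_{\ov{e}}\hookrightarrow S$; by construction $F^*\beta = \eta$. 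Finally, applying $F^*$ to $\alpha\wedge d\beta + d\alpha\wedge\beta$ produces $\omega\wedge d\eta + d\omega\wedge\eta = 0$, and injectivity of $F^*\colon\widehat{\Omega}^3_X(2\delta)\to\Omega^3_{\PP^n}(2k\delta)$ for generic $F$ with $n\geq m+2$ then gives $\alpha\wedge d\beta + d\alpha\wedge\beta = 0$, so $\beta$ is a Zariski tangent vector of $\alpha$.
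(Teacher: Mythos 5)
Your strategy is genuinely different from the paper's. The paper's proof is a two-line reduction: since $\eta=\sum_j B_j\,dF_j$, the product $\eta\wedge F^*(\Lambda_X)$ is a nonzero multiple of $i_R(\eta)\,dF_0\wedge\cdots\wedge dF_m=0$, and then the preceding proposition (proved by resolving $F$ on a blow-up and invoking the descent theorem of \cite{gmv}) produces $\beta$. You instead try to show directly that each $B_j$ lies in $\CC[F_0,\dots,F_m]$. The first part of your argument is sound: contracting the deformation equation with a vertical field $v$ gives $\omega\wedge\bigl(\sum_j v(B_j)\,dF_j\bigr)=0$, and since the $A_i(F)$ have no common factor ($\codim V(K_0)\ge 2$) this yields $v(B_j)=\lambda(v)A_j(F)$ with $\lambda$ an $\OO$-linear functional on $T_F$ of degree zero. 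The final descent steps are also essentially fine, modulo noting that the generic fibre of the lifted map $\CC^{n+1}\dashrightarrow\CC^{m+1}$ is irreducible for generic $F$, so that a polynomial killed by all vertical fields really is of the form $\tilde B_j(F)$.

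The gap is in the central step $\lambda=0$. The conormal sequence $0\to F^*\widehat{\Omega}^1_X\to\Omega^1_{\PP^n}\to T_F^\vee\to 0$ is exact only on the open locus where $F$ is defined, submersive, and lands in the smooth part of $X$; the complement contains $F^{-1}(\sing X)$, which can have codimension exactly $2$, so you cannot silently identify $H^0$ and $H^1$ over that open set with cohomology on $\PP^n$, and the assertions ``$H^1(F^*\widehat{\Omega}^1_X)_0\cong\CC$'' and ``the comparison map is multiplication by $k$'' are not proved. Your stated reason for requiring $n\ge m+2$ (fibres of dimension at least two) is also not what makes the vanishing work. The statement you need is nevertheless true and admits a clean algebraic proof: $T_F$ is the kernel of $dF:\bigoplus_i S(1)\partial_{z_i}\to\bigoplus_j S(ke_j)$, whose cokernel $Q$ is supported on the critical locus, of codimension $n+1-m\ge 3$ for generic $F$ and $n\ge m+2$; writing $0\to T_F\to\bigoplus_i S(1)\to N\to 0$ and dualizing, one gets $\ext^1_S(N,S)\cong\ext^2_S(Q,S)=0$, so $\hom_S(T_F,S)$ is a quotient of $\bigoplus_i S(-1)$, whose degree-zero graded piece vanishes; hence $\lambda=0$. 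With that replacement your argument goes through, but as written the key vanishing is not established.
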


In view of the above we need to prove that a Zariski tangent vector $\eta$ is equivalent, as an element of $\Omega^1_S/\langle dF_0,\dots, dF_m\rangle$, to a special unfolding.
To do this we study the restriction of  $\eta$ to a certain subscheme of singularities of $\omega$.

\begin{remark}	Notice that, in general, $dF_i$ is well-defined only as a form in affine space $\mathbb{A}^{n+1}$ but does not descend to a form in $\PP^n$. Therefore, in view of \cref{propPullback}, it will be convenient to work with the free sheaf $\OO_{\PP^n}^{\oplus n+1}$ associated to $\Omega^1_S$ and generated by sections $dz_0,\dots, dz_n$.
\end{remark}

\begin{definition}\label{defK0}
	Let $\omega=\sum_{i=0}^m A_i(F)dF_i$ be as in \cref{elTeo}. We define \emph{$K_0$} to be the ideal generated by $\langle A_0(F),\dots, A_m(F)\rangle$ of $S$, and \emph{$\mathscr{V}_0\subset \PP^n$} to be the subscheme defined by $K_0$.
\end{definition}

Since under the hypotheses of \cref{elTeo}, a generic point $\pp$ of $\VV_0$ is a \emph{Kupka point}, meaning that $\omega|_\pp=0$ and $d\omega|_\pp\neq 0$, when we restrict the deformation condition of $\eta$ to $\VV_0$ we get
\[
	\left(\omega\wedge d\eta + d\omega\wedge\eta\right)|_{\VV_0}= 0
\]
and so
\[
 	\eta|_{\VV_0}\wedge d\omega|_{\VV_0}=  0.
\]

This immediately implies that $\eta|_{\VV_0} \wedge dF_0|_{\VV_0}\wedge\dots\wedge dF_m|_{\VV_0} = 0$ which, together with \cref{Lema1}, allow us to apply Saito's division theorem \cite{saito} to conclude that $\eta|_{\VV_0}$ is a section of the subsheaf of $\OO_{\VV_0}^{\oplus n+1}$ generated by $\langle dF_0|_{\VV_0},\dots, dF_m|_{\VV_0}\rangle$.
Stated in terms of the $S$-module $\Omega^1_S$ this simply implies that $\eta$ can be written as
\begin{equation}\label{ecu-10}
	\eta= \sum_{i=0}^m A_i(F)\gamma_i + \sum_{j=0}^m B_j dF_j,
\end{equation}
for some $\gamma_i \in \Omega^1_S$.

Now the last step of the proof  is to show that the elements $\gamma_i$ above can be taken to be of the form $dG_i$ for some homogeneous polynomials $G_i$ at the cost of adding some special unfoldings on the right side of \cref{ecu-10}.
This, combined with \cref{propPullback}, essentially concludes the proof of the main theorem as is immediate from \cref{defSUnf} that any element of $\Omega^1_S$ of the form $ \sum_{i=0}^m A_i(F) dG_i$ is equivalent modulo $\langle dF_0,\dots,dF_m\rangle$ to a special unfolding of $\omega$.

\section{Proof of main Theorem.}

Let $X=\PP^m(\overline{e})$ be a weighted projective space with $m\geq 2$, $F:\PP^n\dashrightarrow X$ a dominant rational map and $\alpha$ an homogeneous integrable differential $1$-form on $X$ of total degree $\delta \in Cl(X)$. Recall that $R_{\ov{e}}:=\sum_{i=0}^m e_ix_i \frac{\partial}{\partial x_i}$ is the radial vector field of $X$ and set $\Lambda_X:= i_{R_X} dx_0\wedge\cdots\wedge dx_m$. In order to carry out the proof of our main result, let us start with some lemmas and propositions, some of them of independent interest. The following result is essential in the proof of \cref{propPullback}.

\begin{proposition}
	 Let $\eta$ be a Zariski tangent vector of $\omega = F^*\alpha$ as an integrable form. The following are equivalent.
	\begin{enumerate}
		\item $\eta\wedge F^*(\Lambda_X)=0$.

		\item There is a Zariski tangent vector $\beta$ of $\alpha$ as an integrable form such that $\eta=F^*\beta$.
	\end{enumerate}
\end{proposition}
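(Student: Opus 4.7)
For (2)$\Rightarrow$(1) I would argue directly. Since $\beta\in\widehat{\Omega}^1_X(\delta)$ satisfies $i_{R_X}\beta=0$, the Leibniz rule applied to the $(m+2)$-form $\beta\wedge dx_0\wedge\cdots\wedge dx_m$, which vanishes identically in $m+1$ variables, yields
\[
  0=i_{R_X}(\beta\wedge dx_0\wedge\cdots\wedge dx_m)=(i_{R_X}\beta)\,dx_0\wedge\cdots\wedge dx_m-\beta\wedge\Lambda_X=-\beta\wedge\Lambda_X,
\]
so $\beta\wedge\Lambda_X=0$ and hence $F^*\beta\wedge F^*\Lambda_X=F^*(\beta\wedge\Lambda_X)=0$.

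For (1)$\Rightarrow$(2) I plan a three-step approach. The first step is pointwise: on the open set $U\subseteq\PP^n$ where $F$ is a submersion and the $F_i$ do not all vanish, work in local coordinates $y_0,\dots,y_n$ with $y_i=F_i$ for $i\le m$. In these coordinates both $\eta$ and $F^*\Lambda_X$ have transparent expressions, and extracting coefficients from $\eta\wedge F^*\Lambda_X=0$ simultaneously forces the coefficients of $\eta$ along $dy_j$ for $j>m$ to vanish and imposes an Euler-type relation on the remaining ones. This pointwise statement globalizes to $\eta\wedge dF_0\wedge\cdots\wedge dF_m=0$ on all of $\PP^n$. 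In a second step, for a generic rational $F$ the minors of the Jacobian of $(F_0,\dots,F_m)$ define an ideal of sufficient depth for Saito's division theorem to apply, producing polynomials $B_0,\dots,B_m\in S$ with $\eta=\sum_i B_i\,dF_i$; substituting back into $\eta\wedge F^*\Lambda_X=0$ and using that $dF_0\wedge\cdots\wedge dF_m\ne 0$ extracts the Euler relation $\sum_i e_i F_i B_i=0$.

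The key and hardest step is the third: to show that the $B_i$ can be replaced by polynomial functions of $F_0,\dots,F_m$ alone. Horizontality of $\eta$ with respect to $F$ (all that the wedge condition gives pointwise) is weaker than being a pullback, so here I would crucially use integrability. Contracting $\omega\wedge d\eta+d\omega\wedge\eta=0$ against any vector field $v$ tangent to a generic fiber of $F$, and exploiting $i_v\omega=0$, $\mathcal{L}_v\omega=0$, and $i_v\,dF_\bullet=0$, one arrives at the identity $A_j(F)\,v(B_i)=A_i(F)\,v(B_j)$ for every $i,j$. Under the standing hypotheses on $\omega$ (notably, that the $A_i$ do not vanish simultaneously in codimension one), this forces $(v(B_0),\dots,v(B_m))$ to be scalar-proportional to $(A_0(F),\dots,A_m(F))$; a Cartan-type calculation on pairs of commuting vertical vector fields shows the proportionality factor is fiber-closed, and a vanishing-cohomology argument on the generic fiber of $F$ (which is rationally connected for generic dominant $F$) then lets one redistribute this factor into the pullback part of $\eta$, leaving the new coefficients constant along the fibers and hence equal to $A_i(F)$ for polynomials $A_i\in(S_{\ov e})_{\delta-e_i}$. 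I expect this third step to be the principal obstacle.

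Finally, setting $\beta:=\sum A_i\,dx_i$, the Euler relation $\sum e_i x_i A_i=0$ is inherited from $\sum e_i F_i A_i(F)=0$ via the injectivity of $F^*$ on polynomials, so $\beta\in\widehat{\Omega}^1_X(\delta)$ and $F^*\beta=\eta$ by construction. The integrability $\alpha\wedge d\beta+d\alpha\wedge\beta=0$ then descends from $\omega\wedge d\eta+d\omega\wedge\eta=0$ by injectivity of $F^*$ on global forms, so $\beta$ is the sought Zariski tangent vector of $\alpha$.
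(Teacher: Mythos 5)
Your direction (2)$\Rightarrow$(1) is fine, and the first two steps of (1)$\Rightarrow$(2) are sound and are genuinely different from what the paper does: the pointwise coordinate computation does show that $\eta\wedge F^*(\Lambda_X)=0$ forces $\eta\wedge dF_0\wedge\cdots\wedge dF_m=0$, and Saito's division (the ideal of maximal minors of the Jacobian of $F$ having depth $\ge 2$ for generic $F$ with $n\ge m+1$) then yields $\eta=\sum_i B_i\,dF_i$ with $B_i\in S$ together with the Euler relation $\sum_i e_iF_iB_i=0$. By contrast, the paper never expands $\eta$ in the $dF_i$: it observes that condition (1) makes the kernel of $\omega+\varepsilon\eta$ tangent to the fibres of the trivial deformation of $F$, resolves the indeterminacy of $F$ by blowing up its base locus, and invokes the descent theorem of \cite{gmv} (Theorem 4.6) to obtain $\alpha_\varepsilon$ directly, afterwards rigidifying the line bundle by properness and extending over the codimension-two locus $X\setminus U_r$. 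Your route would, if completed, be more elementary and self-contained; the paper's route avoids precisely the step at which yours breaks down.

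That step is your third one, which you correctly flag as the crux but do not close. From $A_j(F)v(B_i)=A_i(F)v(B_j)$ you get $v(B_i)=\lambda_v A_i(F)$ outside $\VV_0$, and the Cartan computation shows $\lambda$ is a closed $1$-form on each generic fibre; granting simple connectivity this only gives $B_i=A_i(F)\,g+c_i$ fibrewise, with $g$ a holomorphic function on the fibre and $c_i$ fibrewise constants. Nothing so far forces $g$ to be constant: the relevant fibres of $(F_0,\dots,F_m):\CC^{n+1}\to\CC^{m+1}$ are affine and carry many non-constant regular functions, so ``closed implies exact'' does not finish the argument, and the term $g\,\omega$ cannot be absorbed into the projective equivalence class of $\eta$ unless $g$ is constant. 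Your appeal to rational connectedness is moreover false: the generic fibre is a complete intersection of multidegree $(ke_0,\dots,ke_m)$, which is of general type for large $k$; what is true and needed is $(n-m-1)$-connectedness, hence simple connectivity when $n\ge m+2$. To kill $\lambda$ one would have to show that the closed form $\lambda=(dB_i/A_i(F))\vert_{\mathrm{vert}}$ extends to a global holomorphic $1$-form on a smooth projective compactification of the fibre (regularity across $\VV_0\cap\mathrm{fibre}$ and across the boundary) and then use that such a form cannot be exact and nonzero; none of this is carried out. Finally, even with $\lambda=0$ you still owe the argument that a homogeneous polynomial constant on the generic fibres of $(F_0,\dots,F_m)$ actually lies in $\CC[F_0,\dots,F_m]$. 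So the proposal has a genuine gap exactly where you predicted; either supply the missing extension-plus-Hodge-theory argument for $g$, or adopt the paper's strategy, which descends the integrable form along the resolved map and never needs to analyse the coefficients $B_i$.
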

\begin{proof}
The latter condition easily implies the first one. Let us focus on the other implication. Let $\eta$ be an element satisfying the first statement. Straightforward calculation shows that the kernel of the first order deformation $\omega_{\varepsilon} = \omega + \varepsilon \eta$ is tangent to the fibers of the trivial deformation $F_{\varepsilon} = \PP^n \times \Sigma \dashrightarrow X \times \Sigma$. Now the Proposition follows from \cite[Theorem 4.6]{gmv}.
\end{proof}

\begin{proof}[Proof of \cref{propPullback}]
By the above result, we only need to show that $\eta\wedge F^*(\Lambda_X)=0$. In this case we have
	\[
		\eta\wedge F^*(\Lambda_X)= \left(\sum_k B_k e_k F_k \right) dF_0\wedge\cdots\wedge dF_m = i_R(\eta) dF_0\wedge\cdots\wedge dF_m =0.
	\]
	Where $R$ is the radial field of $\PP^n$ and $i_R \eta=0$ as $\eta$ is a Zariski tangent vector of $\omega$.
\end{proof}

Now we will define some ideals of the polynomial ring $S=\CC[z_0,\dots, z_n]$ that will be useful in the proof of the main theorem.

\begin{definition}
        Let $\omega = F^*\alpha=\sum_i A_i(F)dF_i$ be an integrable differential $1$-form on $\PP^n$ as in \cref{elTeo}. Also let $M$ be the jacobian matrix of $F$, that is a matrix of size $(n+1)\times (m+1)$ with polynomial coefficients.
        We define the following ideals of $S$:
        \begin{itemize}
			\item $\m := \langle z_0,\ldots,z_n\rangle$.
        	\item $B(F) := \langle F_0,\dots, F_m\rangle $.
        	\item $K_r := K_0 \cdot B(F)^r$.
        	\item $I$ to be the ideal generated by the maximal minors of $M$.
        \end{itemize}
\end{definition}

The following lemma will allow us to carry out division arguments for differential forms on the schemes defined by the ideals $K_r$. For the definition of \emph{depth} and basic properties related to it we refer to \cite{MATSU}.

\begin{lemma}\label{Lema0}
For a generic map $F:\PP^n\dashrightarrow X$ and a generic integrable differential $1$-form $\alpha$ on $X$ we have
\[ \depth_S(I,S/K_r)\geq n-m .\]
\end{lemma}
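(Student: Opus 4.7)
My plan is to deduce the depth bound from generic codimension estimates combined with local Cohen--Macaulayness. The argument proceeds in three steps: establish codim estimates via genericity, reduce the local analysis to $S/K_0$ via localization at primes containing $I$, and prove local Cohen--Macaulayness of $S/K_0$ using the Kupka normal form.

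The codim estimates come from genericity: for generic $F$, Thom--Porteous applied to the $(n+1)\times(m+1)$ Jacobian matrix gives $\codim V(I) = n-m+1$ in $\mathbb{A}^{n+1}$. Since $\codim V(B(F)) = m+1$ (as $F_0,\ldots,F_m$ is a regular sequence for generic $F$), the codimensions sum to $n+2 > n+1$, so $V(I)\cap V(B(F)) = \{0\}$ in $\mathbb{A}^{n+1}$. For generic $\alpha$ in the relevant component, $\sing(\alpha)$ has codim $2$ in $X$, hence $\codim V(K_0) = 2$ in $\PP^n$, and $\codim(V(I)\cap V(K_0)) = n-m+3$ by generic transversality.

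The localization step is the key reduction: at any prime $\mathfrak{p} \supseteq I+K_r$ with $\mathfrak{p} \neq \m$, the previous estimate forces $B(F) \not\subseteq \mathfrak{p}$, so some $F_i$ is a unit in $S_\mathfrak{p}$ and hence $K_r S_\mathfrak{p} = K_0 S_\mathfrak{p}$, giving $(S/K_r)_\mathfrak{p} = (S/K_0)_\mathfrak{p}$. Thus the depth of $S/K_r$ at $I$ is controlled by that of $S/K_0$. At a generic (Kupka) point $\mathfrak{q}$ of $V(K_0)$, the Kupka normal form writes $\alpha$ locally as $A_0\, dx_0 + A_1\, dx_1$, so $K_0$ is locally a complete intersection of codim $2$ and $(S/K_0)_\mathfrak{q}$ is Cohen--Macaulay. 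The remaining non-Kupka points lie in $F^{-1}(\sing d\alpha)$ of codim $\geq 3$, and their intersection with $V(I)$ has codim $\geq n-m+4$, providing the slack needed to absorb any non-CM behavior there.

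Putting things together, $S/K_r$ satisfies Serre's condition $S_{n-m}$ along $V(I+K_r)$, and the depth formula then yields $\depth_S(I, S/K_r) \geq n-m$. The main obstacle is the careful tracking of local Cohen--Macaulayness at non-Kupka primes and near the origin; this requires the generic codim slack and, if necessary, an inductive argument over $r$ using the short exact sequence
\[
0 \to K_{r-1}/K_r \to S/K_r \to S/K_{r-1} \to 0,
\]
where the graded pieces $K_{r-1}/K_r$ are controlled via the free $S/B(F)$-module structure of $B(F)^{r-1}/B(F)^r$ (coming from the regularity of $F_0,\ldots,F_m$), thereby reducing everything to the base case $r=0$ already handled.
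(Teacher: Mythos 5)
Your route is genuinely different from the paper's, and it has real gaps. The paper never analyzes the local structure of $S/K_r$ away from the origin: it imposes the generic condition that the critical locus $V(I)$ of $F$ be \emph{disjoint} from $V(K_0)$, so that $V(I)\cap V(K_r)$ is supported at the irrelevant ideal $\m$; this reduces $\depth_S(I,S/K_r)$ to $\depth_{S_\m}(\m_\m,(S/K_r)_\m)$ via $\depth_S(I,N)=\inf\{\depth N_\pp:\pp\in V(I)\}$, and the bound at $\m$ is then shown to be an open (hence generic) condition by upper semicontinuity of $\dim\ext^k_S(S/\m,S/K_r)$. Your localization observation --- that at any $\pp\supseteq I+K_r$ with $\pp\neq\m$ some $F_i$ is a unit, so $(S/K_r)_\pp=(S/K_0)_\pp$ --- is correct and pleasant, but it commits you to proving a depth bound at every prime of $V(I)\cap V(K_0)$ and at $\m$, and that is where the proposal breaks down.

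The central gap is the phrase ``codimension slack absorbs non-CM behavior'': depth does not increase with the height of a prime. If $S/K_0$ had an embedded associated prime $\pp$ of height $n-m+4$ lying on $V(I)$, then $\depth (S/K_0)_\pp=0$ and the infimum above is $0$, no matter how large the codimension of the bad locus is. Local complete-intersection structure at Kupka points controls only an open dense subset of $V(K_0)$; at the deeper primes you need an actual lower bound on depth, and the only available source for one is the global Cohen--Macaulayness of $S/K_0$, which the paper establishes separately in \cref{propACM} via the Hilbert--Burch/Hilbert--Schaps resolution of the ideal of maximal minors --- you neither invoke nor prove it. The second gap is at $\pp=\m$, where your reduction from $K_r$ to $K_0$ fails: the proposed induction on $r$ through $0\to K_{r-1}/K_r\to S/K_r\to S/K_{r-1}\to 0$ is only sketched (identifying $K_{r-1}/K_r$ as a sum of modules of the form $F_J\cdot K_0/(K_0\cap B(F))$ and bounding their depth is essentially the content of \cref{Lema3} plus \cref{propACM}), and even granting it you still need $\depth_{S_\m}((S/K_0)_\m)\geq n-m$, which again requires ACM-ness or the paper's semicontinuity argument. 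Finally, note that your transversality count yields a \emph{nonempty} $V(I)\cap V(K_0)$ of codimension $n-m+3$ for $m\geq 3$, whereas the paper's genericity hypothesis makes this intersection empty; whichever is right, your argument must hold at those primes, and as written it does not.
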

\begin{proof} Let us first observe that if a pair $(F,\alpha)$ satisfies
\begin{enumerate}[label={\bfseries \arabic*.}]
	\item the critical points of $F$ do not intersect $V(K_0)$,
	\item $\depth_{S_\m}(\langle z_0,\ldots,z_n\rangle_\m,(S/K_r)_\m)\geq n-m$,
\end{enumerate}
then $\depth_S(I,S/K_r)\geq n-m $. Recall that
\[ \depth_S(I,S/K_r)=\depth_{S/K_r}(\overline{I}, S/K_r)\ , \]
where $\overline{I}$ denotes the class of $I$ in the quotient $S/K_r$, and also
\[ \depth_{S/K_r}(\overline{I},S/K_r)=\inf \{\depth_{(S/K_r)_\pp}(S/K_r)_\pp; \pp \in V(\overline{I}) \}. \]
Condition \textbf{1.} implies that the closed subscheme defined by $I$ on $V(K_r)\subseteq \CC^{n+1}$ is supported at the origin. But then
\begin{align*}
\depth_S(I,S/K_r)&=\depth_{S_\m}(\overline{I_\m},(S/K_r)_\m) \\
&= \depth_{S_\m}(\overline{\sqrt{I_\m}}, (S/K_r)_\m) \\
&= \depth_{S_\m}(\langle z_0,\dots,z_n\rangle_\m, (S/K_r)_\m)
\end{align*}
which is greater or equal than $n-m$ by condition \textbf{2}.

Now both properties \textbf{1.} and \textbf{2.} define dense open sets in the space of pairs $(F,\alpha)$: for the first condition one can take any $\alpha$ defining a foliation on $X$ and a simple normal crossing $F=(F_0,\dots,F_m)$ whose critical values do not intersect the singularities of $\alpha$. The second condition corresponds to the vanishing of $Ext^k_S(S/\langle z_0,\dots,z_n\rangle,S/K_r)$ for $k> n-m$. Since the dimension of this space is upper semicontinuous as a function of $(F,\alpha)$ one can conclude that this condition is also Zariski dense.
\end{proof}

It will be convenient to use the following notation. For $F_0,\dots, F_m$ as above and $J\in \{0,\dots,m\}^r$ we write
\[F_J = \prod_{l=1}^r F_{J_l}.\]

\begin{lemma}\label{Lema1}
Let $\alpha$ be a generic integrable differential 1-form on $X$, $r\geq 0$, $\eta$ be an element of the module $\Omega^1_S$ and $V(K_r)\subseteq \PP^n$ the subscheme associated with the ideal $K_r$. Let $n\ge m+2$. If for some generic homogeneous polynomials $F_0,\dots, F_m$ we have $\eta|_{V(K_r)}\wedge dF_0|_{V(K_r)}\wedge\cdots\wedge dF_m|_{V(K_r)}=0$ then we can express $\eta$ as
	\[
		\eta = \sum_{i=0}^m\sum_{\substack{ J\in \{1,\dots,m\}^r}} A_i(F) F_J \eta_i^J + \sum_{j=0}^m C_j dF_j,
	\]
	for some $\eta_i^J \in \Omega^1_S$.
\end{lemma}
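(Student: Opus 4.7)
The plan is to combine the depth bound of \cref{Lema0} with a Saito-type division argument. Setting $N := K_r \cdot \Omega^1_S + \langle dF_0,\dots,dF_m\rangle$ and $Q := \Omega^1_S/N$, proving the lemma amounts to showing $\eta \in N$; once this is done, writing $K_r = K_0 \cdot B(F)^r$ and expanding its generators as sums of products $A_i(F) F_J$ with multi-indices $J$ immediately produces the decomposition stated in the lemma.

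First I would work on the open set $U := \mathrm{Spec}(S) \setminus V(I)$ where the Jacobian matrix $M$ of $F$ has maximal rank $m+1$. On $U$ the forms $dF_0,\dots,dF_m$ extend locally to a basis of $\Omega^1_S$, so the wedge map $\Omega^1_S/\langle dF_0,\dots,dF_m\rangle \to \Omega^{m+2}_S$ is injective on $U$. The hypothesis $\eta \wedge dF_0 \wedge \cdots \wedge dF_m \in K_r \cdot \Omega^{m+2}_S$ then shows that the image of $\eta$ in $Q$ becomes zero after restriction to $U$; equivalently, it lies in the $I$-torsion part $H^0_I(Q)$.

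The core of the argument is then to prove $H^0_I(Q) = 0$, so that vanishing on $U$ forces global vanishing. Since $\Omega^1_S$ is free and $\depth_S(I, S/K_r) \geq n-m$ by \cref{Lema0}, one has $\depth_S(I, \Omega^1_S \otimes S/K_r) \geq n-m$. For generic $F$ the cokernel $\Omega^1_S/\langle dF_j\rangle$ is resolved by a Buchsbaum--Rim-type complex associated to the Jacobian matrix, and tensoring this resolution with $S/K_r$ (combined with the long exact sequences for local cohomology) should yield $\depth_S(I, Q) \geq 1$ under the hypothesis $n \geq m+2$. In particular $H^0_I(Q) = 0$, which completes the argument.

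The main obstacle I expect is the depth-tracking computation in the previous step. The bound from \cref{Lema0} is $\depth_S(I, S/K_r) \geq n - m$, which matches the length of the Buchsbaum--Rim resolution of $\Omega^1_S/\langle dF_j\rangle$ for generic $F$; the hypothesis $n \geq m+2$ is precisely what provides the single unit of slack needed to guarantee $\depth_S(I, Q) \geq 1$ after tensoring. A secondary issue is to verify that for generic $F$ the tensored resolution remains acyclic, so that the depth-tracking via Tor long exact sequences is valid; this should follow from the same upper-semicontinuity argument used in the proof of \cref{Lema0}.
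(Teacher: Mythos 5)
Your proposal is correct and follows the same strategy as the paper: both arguments reduce the statement to the depth bound $\depth_S(I,S/K_r)\ge n-m\ge 2$ of \cref{Lema0} and then divide $\eta$ by $dF_0,\dots,dF_m$ over the ring $S/K_r$, after which the decomposition in the statement is just a matter of expanding the generators $A_i(F)F_J$ of $K_r$. The only genuine difference is that the paper outsources the division step entirely to Saito's generalization of the de~Rham lemma, whereas you re-derive its $p=1$ case by hand: your observation that the class of $\eta$ in $Q=\Omega^1_S/\bigl(K_r\Omega^1_S+\langle dF_0,\dots,dF_m\rangle\bigr)$ is $I$-torsion, combined with $H^0_I(Q)=0$, is exactly Saito's conclusion for $1$-forms. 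Your proof of $H^0_I(Q)=0$ can also be simplified: no Buchsbaum--Rim complex is needed, since $Q$ is the cokernel of the Jacobian matrix $\overline{M}\colon (S/K_r)^{m+1}\to (S/K_r)^{n+1}$, whose kernel is itself $I$-torsion (it vanishes off $V(\overline{I})$) and hence zero because $\depth_S(I,S/K_r)\ge 1$; the resulting two-term resolution and the local cohomology long exact sequence then give an injection $H^0_I(Q)\hookrightarrow H^1_I\bigl((S/K_r)^{m+1}\bigr)=0$ from $\depth_S(I,S/K_r)\ge 2$. This is precisely the ``one unit of slack'' you identify, and it is where $n\ge m+2$ enters in both proofs; the acyclicity worry you raise at the end dissolves once the resolution is seen to be this short.
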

\begin{proof}
Recall that $\depth_S(I,S/K_r)=\depth_{S/K_r}(\overline{I}, S/K_r)$. From \cref{Lema0} we know that the ideal $\overline{I}$ of coefficients of $dF_0|_{V(K_r)}\wedge\cdots\wedge dF_m|_{V(K_r)}$ in the ring $S/K_r$ has depth greater than one.  Finally, the result immediately follows from Saito’s generalization of De Rham’s division Lemma, see \cite{saito}.
\end{proof}

\begin{lemma}\label{Lema2}
	Let $\nu \in \Omega^2_S$ be a homogeneous element of degree $l$ and $F_0,\dots, F_m$ be generic homogeneous polynomials of degree $ke_0,\dots, ke_m$ respectively. If $\nu\wedge dF_0\wedge\cdots\wedge dF_m=0$ then $\nu=\sum_{i=0}^m \nu_i\wedge dF_i$, for some $\nu_i\in\Omega^1_S$ of degree $l-ke_i$.
\end{lemma}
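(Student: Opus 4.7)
The plan is to apply Saito's generalization of the De Rham division lemma \cite{saito}, exactly as in the proof of \cref{Lema1}. Saito's theorem asserts that if $\omega_0,\dots,\omega_{m}$ are $1$-forms in $S$ whose coefficient matrix with respect to $dz_0,\dots,dz_n$ has maximal minors generating an ideal of depth $\geq 2$, then any form $\eta \in \Omega^{\bullet}_S$ with $\eta\wedge\omega_0\wedge\cdots\wedge\omega_{m}=0$ admits a decomposition $\eta=\sum_i \eta_i\wedge\omega_i$. Taking $\omega_i=dF_i$ and $\eta=\nu$, the coefficient matrix is precisely the Jacobian $M$ and its ideal of maximal minors is $I$. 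Thus the main reduction is to establish $\depth_S(I,S)\geq 2$ for generic $F$. Note that this is slightly different from the setup of \cref{Lema0}, which controls the depth of $I$ on the quotient $S/K_r$; here we need the analogous bound on $S$ itself.

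The subscheme $V(I)\subseteq \spec(S)$ is the critical locus of the polynomial map $F:\mathbb{A}^{n+1}\to \mathbb{A}^{m+1}$. For generic $F_0,\dots,F_m$ of the prescribed degrees, standard results on determinantal loci (the rank-$\leq m$ locus of an $(n+1)\times(m+1)$ matrix has expected codimension $n-m+1$) guarantee that $V(I)$ has codimension $\geq n-m+1$, which is $\geq 3$ since we may assume $n\geq m+2$. Since $S$ is Cohen--Macaulay, depth coincides with codimension and we conclude $\depth_S(I,S)\geq n-m+1\geq 3$, as required by Saito's theorem. That this bound defines a dense open condition on the tuple $(F_0,\dots,F_m)$ follows from a semicontinuity argument essentially identical to the one in the proof of \cref{Lema0}.

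Finally, to obtain the homogeneous decomposition with $\deg(\nu_i)=l-ke_i$, recall that $\Omega^\bullet_S$ inherits a $\ZZ$-grading in which $dz_j$ has degree $1$ and therefore $dF_i$ has degree $ke_i$. Since $\nu$ is pure of degree $l$, taking the degree-$l$ graded component of both sides of $\nu=\sum_i \nu_i\wedge dF_i$ allows us to replace each $\nu_i$ by its degree $l-ke_i$ component without affecting the identity. The substantive step in the argument is the depth estimate; the rest is a direct application of Saito's division and an elementary grading cleanup.
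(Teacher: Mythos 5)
Your proof is correct and follows essentially the same route as the paper, which likewise reduces the lemma to the generic condition $\depth_S(I,S)\geq 3$ and invokes Saito's division theorem (your added justifications of the depth bound via the codimension of the critical locus and of the grading bookkeeping are details the paper leaves implicit). One small imprecision: Saito's theorem requires $\depth_S(I,S)\geq p+1$ to divide a $p$-form, not depth $\geq 2$ universally as you first state, but since you ultimately establish $\depth_S(I,S)\geq n-m+1\geq 3$ (using $n\geq m+2$), the hypothesis needed for the $2$-form $\nu$ is met and the argument goes through.
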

\begin{proof}
Assuming the polynomials $F_i$ satisfy the generic condition given by $\depth_S(I,S)\geq 3$, the result is again a consequence of \cite{saito}.
\end{proof}

The hypothesis on $\alpha$ being a split foliation in \cref{elTeo} is in order to have a good resolution of the ideal defining its singularities. The following proposition shows that if $F$ is generic then this induces a good resolution of the ideal $K_0$.

\begin{proposition}\label{propACM}
	With the notation above, consider
	\[\alpha=i_{X_1}\cdots i_{X_{m-1}}i_{R_X} \ dx_0\wedge\cdots\wedge dx_m = \sum_{i=0}^m A_i(x) dx_i\in H^0(X,\widehat{\Omega}^1_X(\delta))
	\]
	defining a foliation with split tangent sheaf generated by the vector fields $X_j= \sum_i B^j_i(x) \frac{\partial}{\partial x_i}.$
	Suppose $F:\PP^n\dashrightarrow X$ is a rational map of degree $k$ such that the singularities of $\alpha$ are regular values for $F$, \emph{i.e.} if $p\in\sing(\alpha)\subset  X$ then $F^{-1}(p)$ is a regular variety outside the base locus of $F$. Let $\omega=F^*\alpha$.
	Then $\VV_0= V(K_0)$ is an arithmetically Cohen-Macaulay subscheme of $\PP^n$ and the ideal $K_0$ has a two step resolution
	\[
	\xymatrix{
	0 \ar[r]^{} & \bigoplus_{r=0}^{m-1}S(d_r) \ar[r]^-{M}  & \bigoplus_{i=0}^m S(k(e_i-\delta))   \ar[rr]^-{(A_i(F))_{i=0}^m}  & & K_0 \ar[r] & 0\\
	}
	\]
where $d_r\in\ZZ$ conveniently taken and $M$ is the matrix defined as
\[
M=
		\left(\begin{array}{ccc}
		      	e_0F_0 & \cdots & e_mF_m\\
		      	B^1_0(F) & \cdots & B^1_m(F) \\
		      	 & \vdots & \\
		      	 B^{m-1}_0(F) & \cdots & B^{m-1}_m(F)
		      \end{array}\right)
\]
\end{proposition}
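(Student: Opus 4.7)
The plan is to apply the Hilbert--Burch theorem to the ideal $K_0$. The first key observation is that, since $\alpha = i_{X_1}\cdots i_{X_{m-1}}i_{R_X}(dx_0\wedge\cdots\wedge dx_m)$, iterated Laplace expansion in $\CC^{m+1}$ computes
\[ A_i(x) = (-1)^i \det\widetilde{M}_i(x), \]
where $\widetilde{M}_i(x)$ is obtained from the $m\times(m+1)$ matrix with rows given by the coefficients of $R_X,X_1,\ldots,X_{m-1}$ in the basis $\partial/\partial x_0,\ldots,\partial/\partial x_m$ by deleting the $i$-th column. Substituting $x = F(z)$ therefore exhibits the generators $A_i(F)$ of $K_0$ as the signed maximal minors of the matrix $M$ appearing in the statement, so that $K_0 = I_m(M)$ as ideals of $S$.

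The second step is to show that $\mathrm{grade}_S(K_0) = 2$. The singular locus $\sing(\alpha)\subset X$ is cut out by the $A_i$, equivalently it is the locus where the $m\times(m+1)$ matrix with rows $R_X,X_1,\ldots,X_{m-1}$ drops rank from $m$ to at most $m-1$. Because $\alpha$ is a non-trivial integrable $1$-form, the generic rank is $m$, and the standard codimension formula for generic determinantal varieties combined with the a priori bound $\codim(\sing(\alpha))\ge 2$ forces $\sing(\alpha)$ to have codimension exactly $2$ in $X$. The hypothesis that the singularities of $\alpha$ consist of regular values of $F$ then guarantees that $\VV_0 = F^{-1}(\sing(\alpha))$ has codimension $2$ in $\PP^n$ (away from the base locus of $F$, which is itself of codimension at least $2$ and hence does not disturb the height computation). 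Since $S$ is Cohen--Macaulay, grade equals height, and we conclude $\mathrm{grade}_S(K_0) = 2$.

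With these ingredients in hand, the Hilbert--Burch theorem (equivalently, the exactness of the associated Eagon--Northcott complex of the $m\times(m+1)$ matrix $M$) yields the claimed resolution
\[
0 \to \bigoplus_{r} S(d_r) \xrightarrow{M} \bigoplus_{i=0}^m S(k(e_i-\delta)) \xrightarrow{(A_i(F))} K_0 \to 0,
\]
with the shifts $d_r$ dictated by the homogeneous degrees of the rows of $M$; the non-trivial syzygies $\sum_i e_i F_i\, A_i(F)=0$ and $\sum_i B^j_i(F)\, A_i(F)=0$ that make $M$ into a genuine complex come respectively from $i_{R_X}\alpha=0$ and $i_{X_j}\alpha=0$. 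Since $S/K_0$ thereby has projective dimension $2$, the Auslander--Buchsbaum formula gives $\depth(S/K_0) = (n+1)-2 = n-1 = \dim(S/K_0)$, so $S/K_0$ is Cohen--Macaulay and $\VV_0$ is arithmetically Cohen--Macaulay. The main obstacle is Step 2: one must argue carefully that the combined genericity of $\alpha$ (split tangent sheaf with the given description) and of $F$ (transversality to $\sing\alpha$) produces exactly $\codim\VV_0 = 2$, rather than a larger number, which is precisely the hypothesis that activates Hilbert--Burch.
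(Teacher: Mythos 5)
Your argument is correct and follows essentially the same route as the paper: both identify the generators $A_i(F)$ of $K_0$ as the signed maximal minors of $M$ and then invoke the Hilbert--Burch theorem (the paper cites it as the Hilbert--Schaps theorem from \cite{Artin76}) once the codimension is pinned down at $2$. The only difference is one of detail: the paper merely notes that the hypothesis on $F$ gives $\codim\VV_0\ge 2$, whereas you also supply the upper bound from the generic determinantal codimension and the Auslander--Buchsbaum bookkeeping, which the paper leaves implicit.
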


\begin{proof}
The hypothesis on $F$ implies that $\VV_0$ has codimension $\ge2$. Also notice that $K_0$ is exactly generated by the minors of $M$. Then the result follows from the Hilbert-Schaps Theorem, see \cite[Theorem 5.1]{Artin76}.
\end{proof}

\begin{lemma}\label{Lema3}
	Let $F=(F_0,\dots, F_m):\PP^n\dashrightarrow \PP^m(\overline{e})$ be generic rational map of degree $k$ and  $\sum_{J\in \{1,\dots,m\}^r}g_J F_J =0$ be a relation between the generators $F_J$ of the ideal $B(F)^r$. Then each $g_J$ is in $B(F)$.
\end{lemma}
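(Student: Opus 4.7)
My plan is to reduce the statement to the classical commutative–algebraic fact that for a regular sequence, the associated graded algebra of the ideal it generates is a polynomial ring over the quotient.

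First I would argue that for a generic rational map $F$ of degree $k$, the polynomials $F_0,\dots,F_m\in S=\CC[z_0,\dots,z_n]$ form a regular sequence. Since $n\ge m+2\ge m$, a generic intersection of the $m+1$ hypersurfaces $V(F_i)\subseteq\PP^n$ has the expected codimension $m+1$, and a Bertini-type argument (as is implicitly used elsewhere in the paper, e.g.\ in \cref{Lema0}) puts us on a dense open subset of the space of tuples $(F_0,\dots,F_m)$ where $F_0,\dots,F_m$ is a regular sequence.

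Next I would invoke the classical result (see for instance Matsumura, Theorem 16.2) that, for a regular sequence $F_0,\dots,F_m$ generating an ideal $B=B(F)\subseteq S$, the natural surjection
\[
(S/B)[T_0,\dots,T_m] \twoheadrightarrow \bigoplus_{r\ge 0} B^r/B^{r+1}, \qquad T_i\longmapsto \overline{F_i},
\]
is an isomorphism of graded $(S/B)$-algebras. In particular, the piece $B^r/B^{r+1}$ is a \emph{free} $S/B$-module whose basis consists of the distinct degree-$r$ monomials in the $F_i$'s.

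The conclusion is then immediate. Starting from the hypothesis $\sum_J g_J F_J=0$ and reducing modulo $B^{r+1}$, we obtain the relation $\sum_J \overline{g_J}\,\overline{F_J}=0$ in $B^r/B^{r+1}$; by the freeness above, each coefficient $\overline{g_J}$ vanishes in $S/B$, so $g_J\in B(F)$.

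The only delicate point — and the main obstacle, although it is purely combinatorial — is matching the indexing convention: the generators of $B^r$ are naturally indexed by multisets of size $r$ in $\{0,\dots,m\}$, whereas the statement sums over ordered tuples, several of which yield the same monomial $F_J$. One interprets $\overline{g_J}$ in the argument above as the sum of the $g_{J'}$ over the fiber $\{J' : F_{J'}=F_J\}$; this combined coefficient lies in $B(F)$, and a choice of section of the fibration tuples $\to$ multisets recovers the form of the statement used in the proof of the main theorem.
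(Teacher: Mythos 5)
Your proof is correct, but it follows a genuinely different route from the paper's. Both arguments begin the same way, by observing that for generic $F$ (with $n\ge m+2$) the polynomials $F_0,\dots,F_m$ form a regular sequence in $S$ and that the statement is really about regular sequences in a graded ring. From there the paper proceeds by induction on $m$: it reduces modulo $F_0$, applies the inductive hypothesis to the regular sequence $\overline{F_1},\dots,\overline{F_m}$ in $S/\langle F_0\rangle$ to handle the coefficients $g_J$ with $0\notin J$, and asserts that ``the result follows easily'' (the remaining bookkeeping for the terms divisible by $F_0$ is left to the reader). You instead invoke, as a single black box, the quasi-regularity of regular sequences --- the isomorphism $(S/B)[T_0,\dots,T_m]\xrightarrow{\ \sim\ }\bigoplus_{r}B^r/B^{r+1}$ --- and read off the conclusion from the freeness of $B^r/B^{r+1}$ over $S/B$. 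This is cleaner and avoids the induction entirely, at the cost of citing a less elementary theorem; the paper's induction is self-contained but terser at its final step. A further point in your favour: you explicitly address the fact that the $F_J$ for ordered tuples $J$ are not distinct generators of $B(F)^r$ (so that only the fiberwise sums of the $g_J$ are forced into $B(F)$, and one must fix representatives of the multisets to get the statement as used in \cref{propPasoRecur}); the paper's statement and proof silently gloss over this indexing issue.
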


\begin{proof}
Actually, the result can be stated in terms of a regular sequence of $m+1$ non-zero homogeneous elements $F_0, \dots ,F_m$ in a graded integral domain $S$ and  proved by induction on $m$. Assume it is true for sequences of $m$ elements and consider a regular sequence $F_0,\dots,F_m$ in $S$. Working on $S/\langle F_0 \rangle$, we have
\[
\sum_{\substack{J\in \{1,\dots,m\}^r \\ 0 \notin J}}\overline{g_J} \overline{F_J} =0.
\]
Since $\overline{F_1}, \dots, \overline{F_m}$  is again a regular sequence, then each $\overline{g_j} \in \langle \overline{F_1},\dots, \overline{F_m}\rangle$ and the result follows easily.
\end{proof}

The following technical result will be a central tool to perform an iterative argument in the proof of \cref{elTeo}.

\begin{proposition}\label{propPasoRecur}
	Let $r\geq 0$ and $\omega=F^*\alpha=\sum_{i=0}^m A_i(F) dF_i$ be as in \cref{elTeo}. Suppose
	\[
		\eta=\sum_{i=0}^m \sum_{J\in \{0,\dots,m\}^r} A_i(F) F_J \eta^i_J + \sum_{k=0}^m C_k dF_k ,
	\]
	 is a Zariski tangent vector of $\omega$.
	Then we can write $\eta$ as
	\[
		\eta= \widetilde{\eta} + \sum_{i=0}^m \sum_{J'\in \{0,\dots,m\}^{r+1}} A_i(F) F_{J'} \eta^i_{J'}+\sum_{k=0}^m D_k dF_k,
	\]
with $\widetilde{\eta}$ a special unfolding and $\eta^i_{J'} \in \Omega^1_S$ such that $\deg \eta^i_{J'}=e_i-\sum_{j\in J'} e_j$.
\end{proposition}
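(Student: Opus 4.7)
The plan is to use the Zariski-tangent-vector condition $\omega\wedge d\eta+d\omega\wedge\eta=0$ to constrain the forms $\eta^i_J$, then exploit the algebraic structure furnished by \cref{propACM} together with the division theorems \cref{Lema1,Lema2} to peel off a special unfolding $\widetilde\eta$ whose complement lies in $K_0\cdot B(F)^{r+1}\cdot\Omega^1_S+\langle dF_0,\ldots,dF_m\rangle$.

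First, I would substitute the assumed decomposition of $\eta$ into $\omega\wedge d\eta+d\omega\wedge\eta=0$, using $d\omega=\sum_{i,j}\partial_jA_i(F)\,dF_j\wedge dF_i$, and expand. The terms contributed by $\sum_kC_k\,dF_k$ are wedges of $dF_k$'s with $\omega$ and $d\omega$, and can ultimately be absorbed into the final $\sum_kD_k\,dF_k$. Matching coefficients of the tuples $F_J$ in the resulting $3$-form identity produces a level-$r$ congruence of the shape
\[
\sum_iA_i(F)\,\tau^i_J\equiv 0\pmod{B(F)\cdot\Omega^2_S+\langle dF_0\wedge\cdots\wedge dF_m\rangle},
\]
where each $\tau^i_J$ is a $2$-form built from $d\eta^i_J$ and from wedges of the $dF_l$'s with the $\eta^i_J$'s.

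Next, I would use the two-step resolution of $K_0$ from \cref{propACM} to lift this congruence from a syzygy among $A_0(F),\ldots,A_m(F)$ to an honest relation in $\Omega^2_S$: the resolution tells us that every such syzygy is a combination of the rows of the matrix $M$, namely the Euler identity $\sum_ie_iF_iA_i(F)=0$ and the split-tangent relations $\sum_iB^j_i(F)A_i(F)=0$ coming from $\T_\alpha\simeq\bigoplus_i\OO_X(\delta_i)$. An application of Saito's division theorem (\cref{Lema2}) then converts the resulting $2$-form equation into a $1$-form decomposition of each $\eta^i_J$ modulo $\langle dF_0,\ldots,dF_m\rangle$, of the shape $F_J\eta^i_J\equiv dG^J_i\pmod{B(F)\cdot\Omega^1_S+\langle dF_0,\ldots,dF_m\rangle}$ for suitable homogeneous polynomials $G^J_i\in S_{ke_i}$. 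Setting $G_i:=\sum_{|J|=r}G^J_i$ and forming the associated special unfolding
\[
\widetilde\eta=\sum_iA_i(F)\,dG_i+\sum_{i,j}\partial_jA_i(F)\,G_j\,dF_i,
\]
the difference $\eta-\widetilde\eta$ will be expressible in the desired form $\sum_{i,J'}A_i(F)F_{J'}\eta^i_{J'}+\sum_kD_k\,dF_k$ with $|J'|=r+1$. A degree bookkeeping then gives $\deg\eta^i_{J'}=e_i-\sum_{j\in J'}e_j$.

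The main obstacle lies in managing two layers of indeterminacy. The lifting through \cref{propACM} determines the $G^J_i$ only up to elements of $B(F)\cdot S$, while Saito's division theorem introduces an ambiguity corresponding to the kernel of wedge multiplication by $dF_0\wedge\cdots\wedge dF_m$. Both ambiguities must be organized coherently so that the $G^J_i$ sum into genuine homogeneous polynomials $G_i$ of degree $ke_i$, and so that the leftover indeterminacies can be reinterpreted either as contributions to $A_i(F)F_{J'}\eta^i_{J'}$ with $|J'|=r+1$ or to $\sum D_k\,dF_k$. This reorganization depends critically on \cref{Lema3} to control relations in $B(F)^r$, together with the depth estimates of \cref{Lema0}, and it is at this step that the non-positive-splitting hypothesis of the main theorem enters the argument.
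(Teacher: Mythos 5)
Your outline follows the paper's skeleton at the start (wedge the deformation equation with the $dF_k$'s, apply Saito division via \cref{Lema2} and \cref{Lema1}, control syzygies among the $A_i(F)$ by \cref{propACM} and the non-positive splitting, and among the $F_J$ by \cref{Lema3}), but there is a genuine gap at the decisive step. You assert that Saito's division theorem ``converts the resulting $2$-form equation into a $1$-form decomposition of the shape $F_J\eta^i_J\equiv dG^J_i$.'' Saito's theorem cannot do this: it only certifies membership of a form in the submodule generated by $dF_0,\dots,dF_m$ (when the coefficient ideal has enough depth); it never produces an \emph{exact} form $dG^J_i$, and exactness of the correction is precisely what is needed to recognize $\widetilde\eta$ as a special unfolding in the sense of \cref{defSUnf}. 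The missing idea is the contraction with the radial vector field $R=\sum_i z_i\frac{\partial}{\partial z_i}$ of $\PP^n$ combined with the Euler--Cartan identity $i_R\,d\eta^i_J+d\,i_R\eta^i_J=a_J\,\eta^i_J$ for a homogeneous $1$-form of degree $a_J$. In the paper, after two rounds of division one reaches an identity of $2$-forms $\sum_{i,J}A_i(F)F_J\,d\eta^i_J=\sum_{i,k,J}A_i(F)F_J\,\widetilde{\nu^k_{i,J}}\wedge dF_k$; contracting with $R$ yields a syzygy $\sum_iA_i(F)\mu_i=0$ whose coefficients have degree $e_i-1$, which is killed by \cref{propACM} together with the non-positivity of the splitting (no nontrivial syzygy can have a term of degree less than $e_i$ multiplying $A_i(F)$); then \cref{Lema3} disposes of the relation $\sum_JF_J\mu_{i,J}=0$; and only then does the Euler identity let one \emph{solve} for $\eta^i_J=d\bigl(-\tfrac{1}{a_J}i_R\eta^i_J\bigr)+\sum_k\tfrac{1}{a_J}i_R(\widetilde{\nu^k_{i,J}})\,dF_k+\sum_lF_l\widetilde{\mu^l_{i,J}}$, producing the explicit polynomials $G^i_J=-\tfrac{1}{a_J}i_R\eta^i_J$. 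Without this step your $G^J_i$ do not exist.

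A secondary problem: you propose ``matching coefficients of the tuples $F_J$'' in the $3$-form identity. The monomials $F_J$ are not free generators of $B(F)^r$ over $S$ --- they satisfy relations --- so coefficients cannot simply be matched; this is exactly the issue \cref{Lema3} addresses, and the paper invokes it only after isolating a genuine relation $\sum_JF_J\mu_{i,J}=0$, not at the level of the initial integrability equation. Your placement of the non-positive-splitting hypothesis at the final ``reorganization'' stage is also off: it is used earlier, to force $\mu_i=0$ by the degree count above.
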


\begin{proof}
	As $\eta$ is a Zariski tangent vector of $\omega$ then the following equation is verified.
	\begin{equation}\label{eqdef}
		\eta\wedge d\omega + \omega\wedge d\eta=0.
	\end{equation}
	Multiplying \cref{eqdef} by $dF_0\wedge\cdots\wedge dF_{k-1}\wedge dF_{k+1}\wedge\cdots\wedge dF_m$ we get
	\[
		\left(\sum_{i=0}^m\sum_{J\in \{0,\dots,m\}^r} A_i(F)F_J d\eta^i_J\right) A_k(F) dF_1\wedge\cdots\wedge dF_m =0.
	\]
	Then, by \cref{Lema2} there are homogeneous elements $\nu_0,\dots,\nu_m\in \Omega^1_S$ such that
	\begin{equation}\label{eqII}
		\sum_{i=0}^m\sum_{J\in \{0,\dots,m\}^r} A_i(F)F_J d\eta^i_J = \sum_{k=0}^m \nu_k\wedge dF_k.
    \end{equation}
    Now in the left hand side of \cref{eqII} all the coefficients multiplying the forms $d\eta^i_J$ are in the ideal $K_r$.
    So if we restrict \cref{eqII} to $V(K_r)$ and we multiply by $\bigwedge_{i\neq k} dF_i$ we get
    \[
    	\nu_k|_{V(K_r)}\wedge dF_0|_{V(K_r)}\wedge\cdots\wedge dF_m|_{V(K_r)}=0.
    \]
    Then by \cref{Lema1} we have the following equation in the module $\Omega^1_S/\langle dF_i\rangle_{i=0}^m$
    \[
    	\nu_k\equiv \sum_{i=0}^m\sum_{J\in \{0,\dots,m\}^r} A_i(F)F_J \nu_{i,J}^k \mod \langle dF_0,\dots,dF_m\rangle
    \]
    Replacing in \cref{eqII} the $\nu_k$ by this expression we get
    \[
    	\sum_{i=0}^m\sum_{J\in \{0,\dots,m\}^r} A_i(F)F_J d\eta^i_J =
    	\sum_{i,k=0}^m\sum_{J\in \{0,\dots,m\}^r} A_i(F)F_J\nu^k_{i,J}\wedge dF_k+\sum_{\substack{k,l=0\\k<l}}^mB_{kl} dF_k\wedge dF_l,
    \]
    for some homogeneous polynomials $B_{kl}$.
    If we restrict once more to $V(K_r)$ and multiply by $\bigwedge_{i\neq k,l}dF_i$, we can deduce $B_{lk}|_{V(K_r)}=0$, \emph{i.e.} $B_{lk}\in K_r$.
    As $K_r$ is generated by the polynomials $A(F)F_J$ with $|J|=r$ then we can regroup terms to get forms $\widetilde{\nu^k_{i,J}}$ such that
    \begin{equation}\label{eqIII}
    	\sum_{i=0}^m\sum_{J\in \{0,\dots,m\}^r} A_i(F)F_J d\eta^i_J = \sum_{i,k=0}^m\sum_{J\in \{0,\dots,m\}^r} A_i(F)F_J\widetilde{\nu^k_{i,J}}\wedge dF_k.
    \end{equation}
    Contracting \cref{eqIII} with the radial vector field $R=\sum_i \frac{\partial}{\partial z_i}$ we get
    \[
    	\sum_{i=0}^m A_i(F) \left(\sum_{J\in \{0,\dots,m\}^r}F_J i_R\left(d\eta^i_J-\sum_{k=0}^m \widetilde{\nu_{i,J}^k}\wedge dF_k \right)\right)=0.
    \]
    Lets call $\mu_i:= \sum_{J\in \{0,\dots,m\}^r}F_J i_R\left(d\eta^i_J-\sum_{k=0}^m \widetilde{\nu_{i,J}^k}\wedge dF_k \right)$, then $\mu_i$ is an element of $\Omega^1_S$ of homogeneous degree $e_i$, which means that the coefficients of $\mu_i$ are polynomials of degree $e_{i}-1$.
    But at the same time the coefficients of the $\mu_i$'s define relations among the $A_i(F)$.
    By \cref{propACM} these relations must be all trivial, because $\alpha$ has a non-positive splitting and then  no non trivial relations among the $A_i(F)$'s can have a term multiplying $A_i(F)$ of degree less than $e_i$.
    Therefore we have $\mu_i=0$ for all $i=0,\dots, m$.
    We can further decompose $\mu_i$ as a sum by defining $\mu_{i,J}:= i_R\left(d\eta^i_J-\sum_{k=0}^m \widetilde{\nu_{i,J}^k}\wedge dF_k \right)$ and get
    \[
    	\mu_i = \sum_{J\in \{0,\dots,m\}^r} F_J \mu_{i,J} =0.
    \]
    Where now $\deg \mu_{i,J} = e_i-\sum_{j\in J} e_j$.
    Now as the coefficients of the $\mu_{i,j}$ are relations among the $F_J$ with $|J|=r$ we have by \cref{Lema3} that $\mu_{i,J}= \sum_{l=0}^m F_l \mu_{i,J}^l$.
    We have then
    \[
    	i_R\left(d\eta^i_J-\sum_{k=0}^m \widetilde{\nu_{i,J}^k}\wedge dF_k \right)= \sum_{l=0}^m F_l \mu_{i,J}^l.
    \]
    Which implies
    \[
    	\eta^i_J = d\left( -\frac{1}{a_J} i_R\eta^i_J\right) + \sum_{k=0}^m \frac{1}{a_J} i_R \left(\widetilde{\nu^k_{i,J}}\right) dF_k + \sum_{l=0}^m F_l \widetilde{\mu_{i,J}^l},
    \]
    with $a_J$ an integer number and some elements  $\widetilde{\mu_{i,J}^l}\in \Omega^1_S$.

    Now, we can define $G^i_J :=-\frac{1}{a_J} i_R\eta^i_J$ and $H_i:=\sum_{J\in \{0,\dots,m\}^r}F_JG^i_J$. Then taking our original Zariski tangent vector \[\eta=\sum_{i=0}^m \sum_{J\in \{0,\dots,m\}^r} A_i(F) F_J \eta^i_J + \sum_{k=0}^m C_k dF_k\] and writing down everything we have done so far,  we finally get
    \[
    	\eta = \widetilde{\eta} + \sum_{i=0}^m\sum_{J\in \{0,\dots,m\}^r} \sum_{l=0}^m A_i(F) F_J F_l \widetilde{\mu_{i,J}^l} + \sum_{k=0}^m \tilde{C}_k dF_k,
    \]
    with
    \[
        \widetilde{\eta} = \sum_{i=0}^m A_i(F) dH_i + \sum_{i=0}^m \sum_{j=0}^m \left(\frac{\partial A_i}{\partial x_j}(F) H_j\right) dF_i.
    \]
    which gives us the desired result.
\end{proof}

Now we are ready to complete the proof of our main result.

\begin{proof}[Proof of \cref{elTeo}]
	If $\eta$ is a Zariski tangent vector of $\omega$ then
	\[
		\eta|_{\VV_0}\wedge d\omega|_{\VV_0}=0.
	\]
	As a generic point of $\VV_0$ is a Kupka point, then $d\omega|_{\VV_0}\neq 0$ which implies
	\[
		\eta|_{\VV_0}\wedge dF_0\wedge\cdots\wedge dF_m|_{\VV_0}=0.
	\]
	Now by \cref{Lema1} this implies that we can write
	\[
		\eta= \sum_{i=0}^m A_i(F) \eta_i + \sum_{k=0}^m C_k dF_k.
	\]
	Now we can iteratively apply  \cref{propPasoRecur} to write
	\[
		\eta = \sum_{j=1}^N \eta_j + \text{something in } \langle dF_0,\dots,dF_m\rangle.
	\]
	where $\eta_j$ is a special unfolding for $j=1,\dots,N$.
	Notice that $\eta-\sum_j \eta_j$ is still a Zariski tangent vector of $\omega$ that belongs to the submodule generated by $dF_0,\dots,dF_m$. Finally,  using \cref{propPullback}, we can write $\eta$ as a sum of special unfoldings and the pullback of a Zariski tangent vector of $\alpha$, as claimed.
\end{proof}

\section{Applications}\label{sectionApp}

This section is devoted to construct new irreducible components of the spaces of foliations $\FF^1(\PP^n,d)$ by means of applying  \cref{elCor}. We will also show that the stability of some families already appearing in the literature can be deduced from this result as well.

\subsection{Pullbacks of foliations on $\PP^2(\overline{e})$}
In the case $m=2$, the integrability condition is trivial and therefore the spaces $\FF^1(\PP^2(\overline{e}),\delta)$ are open in $\PP H^0(\PP^2(\overline{e}),\widehat{\Omega}^1_{\PP^2(\overline{e})}(\delta))$. In particular, they are irreducible and reduced. Since every foliation on a surface is defined by a single twisted vector field, the tangent sheaf of every foliation will have a non-positive splitting whenever $\delta\geq e_0+e_1+e_2$. This enables us to apply \cref{elCor} to the case $\C=\FF^1(\PP^2(\overline{e}),\delta)$ provided that a generic element in $\C$ admits only Kupka singularities:

\begin{corollary}\label{pullbackp2p} Let $\ov{e}=(e_0,e_1,e_2)$, $n\geq 4$ and $\delta\geq e_0+e_1+e_2$ such that a generic foliation of degree $\delta$ on $\PP^2(\overline{e})$ has only Kupka singularities. Then for every $k\in \NN$ there exists an irreducible component $PB(n,k,\overline{e},\delta))$ of $\FF^1(\PP^n,k\delta)$ whose generic element is a pullback  of a foliation of degree $\delta$ on $\PP^2(\overline{e})$   under a rational map $F:\PP^n \dashrightarrow \PP^2(\overline{e})$ of degree $k$. Moreover, the scheme $\FF^1(\PP^n,k\delta)$ is generically reduced along $PB(n,k,\overline{e},\delta))$.
\end{corollary}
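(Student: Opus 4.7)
The plan is to apply the first Corollary to $\C=\FF^1(\PP^2(\ov{e}),\delta)$. Since $m=2$, the condition $n\geq m+2$ becomes $n\geq 4$, which is part of the hypothesis. So what remains is to verify (i) that $\C$ is a generically reduced irreducible component of $\FF^1(\PP^2(\ov{e}),\delta)$, and (ii) that a generic $\alpha\in\C$ satisfies the hypotheses of the Main Theorem.

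For (i), on the surface $X=\PP^2(\ov{e})$ the integrability condition $\alpha\wedge d\alpha=0$ is automatic, being a 3-form on a 2-fold. Hence $\FF^1(X,\delta)$ is the dense open subset of the projective space $\PP H^0(X,\widehat{\Omega}^1_X(\delta))$ cut out by the open condition $\codim(\sing(\alpha))\geq 2$; as such, it is itself irreducible, reduced, and equal to its unique (generically reduced) irreducible component.

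For (ii), two conditions must be checked. For the splitting, $\T_\alpha$ is a rank 1 reflexive sheaf on the normal variety $X$, hence of the form $\OO_X(\delta')$ for a unique $\delta'\in \mathrm{Cl}(X)\simeq\ZZ$, so the splitting is automatic. Taking first Chern classes in the defining sequence $0\to\T_\alpha\to\T_X\to\II_{\sing(\alpha)}\otimes\OO_X(\delta)\to 0$ and using $c_1(\T_X)=\sum_{i=0}^2 e_i$ (which comes from the dual Euler sequence \cref{2}), one obtains $\delta'=\sum_i e_i-\delta$; the hypothesis $\delta\geq e_0+e_1+e_2$ then yields $\delta'\leq 0$, i.e., the splitting is non-positive. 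For the second hypothesis, the assumption that a generic $\alpha$ has only Kupka singularities says exactly that $d\alpha\neq 0$ at every point of the zero-dimensional set $\sing(\alpha)$; this realizes the condition ``$d\alpha$ vanishes in codimension greater than $2$'' in the precise form in which it is used in the proof of the Main Theorem, namely that $d\omega\neq 0$ at the generic point of $\VV_0=\sing(\omega)$.

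With (i) and (ii) in place, the first Corollary directly produces the irreducible component $PB(n,k,\ov{e},\delta)$ of $\FF^1(\PP^n,k\delta)$ together with the generic reducedness statement. The only delicate step is translating the Main Theorem's hypothesis on $d\alpha$ into the surface Kupka condition, which works because $\sing(\alpha)$ is zero-dimensional on a 2-fold; no other substantive obstacle is expected.
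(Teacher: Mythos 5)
Your proof is correct and follows essentially the same route as the paper: the paper likewise applies the first Corollary to $\C=\FF^1(\PP^2(\ov{e}),\delta)$, using that integrability is vacuous on a surface (so this space is open, irreducible and reduced in $\PP H^0(\PP^2(\ov{e}),\widehat{\Omega}^1(\delta))$) and that the rank-one tangent sheaf is automatically split and non-positive when $\delta\geq e_0+e_1+e_2$. Your explicit Chern-class computation of $\delta'=\sum_i e_i-\delta$ and your observation that the Kupka hypothesis is the precise surface-level substitute for the condition on $d\alpha$ (which is how that condition is actually used in the proof of the Main Theorem) only make explicit what the paper leaves implicit.
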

\begin{remark}Since the hypothesis on the singularities is clearly satisfied in the case of $\PP^2(\ov{e})=\PP^2$, this gives an alternative proof of the main Theorem in \cite{CLE} for $n\geq 4$.
\end{remark}

It could be the case, however, that for an arbitrary $\ov{e}$ every foliation of some special degree $\delta$ contains non-Kupka points among their singularities. Let us now construct for every $\overline{e}$ an infinite family of degrees where this hypothesis is satisfied.

Depending on the arithmetic of $\ov{e}$ and $\delta$, some of the points $[1:0:0]$, $[0:1:0]$ and $[0:0:1]$ may be forced to be a singularity of \textit{every} foliation of degree $\delta$. Moreover, such points will be non-Kupka singularities. In \cite[Example 3.9]{gmv} it is shown that the degrees $\delta$ avoiding this phenomenon are the ones such that for every $i$ we have
\begin{equation}\label{equcong} e_i | \delta-(e_0+e_1+e_2) + e_j
\end{equation}
for some $j$. The solutions to the equations consist of a certain (positive) number of residues modulo $e_0e_1e_2$. Observe further that for such $\delta$ the canonical map
$$H^0(\PP^2(\ov{e}), \T_{\PP^2(\ov{e})}(\delta-(e_0+e_1+e_2)))\otimes_\CC\OO_{\PP^2(\ov{e})}\to \T_{\PP^2(\ov{e})}(\delta-(e_0+e_1+e_2))$$
is non-zero at every fiber. In particular, applying \cite[Proposition 3.7]{gmv} we can deduce:

\begin{lemma}\label{condicionl} Let $\delta\geq e_0+e_1+e_2$ satisfying conditions (\ref{equcong}) above. Then for every $k\geq 1$ a generic foliation of degree $\delta+k e_0e_1e_2$ will have only Kupka singularities.
\end{lemma}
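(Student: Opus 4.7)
The plan is to reduce the statement, for every $k\geq 1$, to the setting already worked out in the paragraph immediately preceding the lemma, and then to invoke \cite[Proposition 3.7]{gmv} verbatim. The key observation is that the arithmetic conditions (\ref{equcong}) depend on $\delta$ only through its residues modulo $e_0$, $e_1$ and $e_2$. Since each $e_i$ divides the product $e_0e_1e_2$, one has $\delta+k e_0 e_1 e_2\equiv \delta\pmod{e_i}$ for every $k\geq 1$ and every $i\in\{0,1,2\}$. Hence, if $\delta$ satisfies (\ref{equcong}) with some choice of companion indices $j=j(i)$, so does $\delta':=\delta+k e_0 e_1 e_2$ with the very same choice.

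Once this residue-class preservation is in place, I would appeal to the discussion immediately above the statement (and \cite[Example 3.9]{gmv}) to conclude that for $\delta'$ no coordinate vertex of $\PP^2(\ov{e})$ is a forced singularity of every foliation of degree $\delta'$, or equivalently that the canonical evaluation
\[
H^0(\PP^2(\ov{e}),\, \T_{\PP^2(\ov{e})}(\delta'-e_0-e_1-e_2))\,\otimes_{\CC}\, \OO_{\PP^2(\ov{e})}\;\longrightarrow\;\T_{\PP^2(\ov{e})}(\delta'-e_0-e_1-e_2)
\]
is non-zero at every fiber. Here I would use that on the surface $\PP^2(\ov{e})$ every codimension $1$ foliation is dual, via contraction with $\Lambda_X=i_{R_X}(dx_0\wedge dx_1\wedge dx_2)$, to a twisted vector field modulo the radial field $R_X$, so that studying singularities of $\omega$ amounts to studying zeros of the associated vector field.

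With generic fibrewise non-vanishing of the evaluation map at hand, a direct application of \cite[Proposition 3.7]{gmv} yields that a generic foliation of degree $\delta'$ has only Kupka singularities, as required.

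The essentially only substantive step is the arithmetic one, which is immediate; the remainder is pure invocation of the existing framework. If one were forced to reprove \cite[Proposition 3.7]{gmv} from scratch, the main obstacle would be a Bertini-type argument in the weighted projective setting: one would have to check that the locus of foliations carrying a non-Kupka singularity is a proper closed subvariety of $\PP H^0(\PP^2(\ov{e}),\widehat{\Omega}^1_{\PP^2(\ov{e})}(\delta'))$, paying careful attention to the singularities of $\PP^2(\ov{e})$ itself and to the transversality of the zero scheme of a generic twisted vector field, which on a surface precisely ensures $d\omega(p)\neq 0$ at the corresponding singularity.
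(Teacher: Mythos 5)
Your proposal is correct and follows essentially the same route as the paper, whose proof is contained in the paragraph preceding the lemma: the conditions (\ref{equcong}) are congruences modulo the $e_i$ and hence cut out residue classes modulo $e_0e_1e_2$, so $\delta+ke_0e_1e_2$ satisfies them whenever $\delta$ does, the evaluation map on $\T_{\PP^2(\ov{e})}(\delta'-(e_0+e_1+e_2))$ is then fiberwise non-zero, and \cite[Proposition 3.7]{gmv} gives the conclusion. Your extra remarks on the surface duality and on what reproving that proposition would require are consistent with the paper but not needed for the argument.
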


Until this point we have only dedicated ourselves to construct stable families which are pullbacks under \emph{generic} rational maps. However, in some situations our methods also enable us to deal with the non-generic case as well.

In \cite[Section 4]{degree3LPdC} the authors construct subvarieties $TM_s(e_0,e_1,e_2,l)$ of the space $\FF^1(\PP^3,s)$ whose general member is conjugated to a foliation tangent to a multiplicative action of the form
\begin{align*}
    \phi^{(e_0,e_1,e_2)}:\CC^*\times \PP^3 &\to \PP^3 \\
    (\lambda,[x_0,\dots,x_3])&\to [\lambda^{e_0} x_0, \lambda^{e_1} x_1, \lambda^{e_2} x_2,x_3],
\end{align*} and satisfying
\[ L_{v_{(e_0,e_1,e_2)}}\omega_\FF= l\, \omega_\FF,\]
where $v_{(e_0,e_1,e_2)}= e_0 x_0\frac{\partial}{\partial x_0} + e_1 x_1 \frac{\partial}{\partial x_1} + e_2 x_2 \frac{\partial}{\partial x_2}$ is a vector field generating the $\CC^*$-action.
The general leaf of these foliations happens to be a pullback of a foliation on $\PP^2(\ov{e})$ under the rational map
\begin{align*}
   \pi_{\ov{e}}: \PP^3&\dashrightarrow\PP^2_{(e_0,e_1,e_2)}\\
    [x_0:\dots:x_3]&\mapsto [x_0 x_3^{e_0-1}: x_1 x_3^{e_1-1}: x_2 x_3^{e_2-1}].
\end{align*}
Moreover,  \cite[Corollary 4.11]{degree3LPdC} (and its corresponding proof) states that if $1\leq e_0 < e_1 < e_2$ are coprime integers such that the general member of $TM_s(e_0,e_1,e_2,l)$ has finitely many non-Kupka singularities, then its tangent sheaf is of the form
$$\T\FF\simeq \OO_{\PP^3}\oplus \OO_{\PP^3}(2-s)$$
and $TM_s(e_0,e_1,e_2,l)$ is an irreducible component of $\FF^1(\PP^3,s)$. Observe that the splitting above is non-positive. The numerous families of tuples $(e_0,e_1,e_2,l)$ such that the general member of $TM_s(e_0,e_1,e_2,l)$
has only finitely many non-Kupka singularities are parametrized in \cite[Theorem 4.12]{degree3LPdC}. As it is shown in the mentioned work, part of this family of components will be generically reduced, although it is not clear how to isolate this subset with a method other than a case by case study.
We are now ready to use \cref{elCor}.

\begin{corollary}
\label{corPBTM}    Let $TM_s(e_0,e_1,e_2,l)$ be a generically reduced irreducible component of $\FF^1(\PP^3,s)$ whose general member has finitely many non-Kupka singularities. Then for every $k\in \NN$ there exists an irreducible component $PBTM(n,s,\ov{e},l)\subseteq \FF^1(\PP^n,ks)$ whose general element is a pullback of a foliation in $TM_d(e_0,e_1,e_2,l)$ under a map $\varphi:\PP^n\dashrightarrow\PP^3$ of degree $k$. In particular, these foliations are pullbacks under the non-generic rational maps $\pi_{\overline{e}}\circ\varphi: \PP^n\dashrightarrow\PP^2(\ov{e})$.
\end{corollary}

\subsection{Pullbacks of foliations induced by Lie group actions}

Let us now move to a different type of objects, namely foliations on $\PP^m$ whose tangent sheaf is trivial. The corresponding integrable differential $1$-form can be constructed as follows. Let $\g\subseteq H^0(\PP^m,\T_{ \PP^m})$ be a $(m-1)$-dimensional Lie subalgebra such that $\exp(\g)\subseteq Aut(\PP^m)$ acts with trivial stabilizers outside some algebraic set of codimension $2$. Since $\bigwedge^{m-1}\g \subseteq \bigwedge H^0(\PP^m, \T_{\PP^m})$ is a one-dimensional subspace, we obtain by duality an integrable element
\[ \omega(\g)\in H^0(\PP^m,\Omega^1_{\PP^m}(m+1)) .\]
The leaves of the foliation induced by $\omega(\g)$ coincide with the generic orbits of the action of $\exp(\g)$ and the corresponding tangent sheaf satisfies
$$\T_{\omega(\g)}\simeq \g \otimes_\CC\OO_{\PP^m}.$$
Along the literature one can find many stable families of foliations of this kind. We will now apply \cref{elCor} to some of these families in order to produce irreducible components of the spaces of foliations. First, let us recall `Theorem 3' in \cite{fj}, which can be stated as follows:

\begin{theorem} \label{teoCP}
Let $\g\subseteq H^0(\PP^m,\T_{\PP^m})$ be a Lie subalgebra such that $\exp(\g)$ acts with trivial stabilizers in codimension one and such that $d\omega(\g)$ vanishes in codimension greater than $2$. Then for every foliation $\FF'$ near $\FF(\g)$ there exists a subalgebra $\g'\subseteq H^0(\PP^m, \T_{\PP^m})$ near $\g$ such that $\FF'=\FF(\g)$. In particular, if $\g$ is a rigid subalgebra, then $\overline{Aut(\PP^m)\cdot \FF(\g)} $ is an irreducible component of the space $\FF^1(\PP^m,m+1)$.
\end{theorem}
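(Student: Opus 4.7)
The plan is to exploit the split-tangent-sheaf stability result \cite[Theorem A]{V} already invoked earlier in the paper. The foliation $\FF(\g)$ has tangent sheaf
\[
\T_{\FF(\g)} \simeq \g \otimes_\CC \OO_{\PP^m} \simeq \OO_{\PP^m}^{\oplus(m-1)},
\]
which is trivial and in particular split. Combined with the hypothesis that $d\omega(\g)$ vanishes in codimension greater than $2$, that theorem applies and tells us that every foliation $\FF'$ near $\FF(\g)$ has split tangent sheaf. Since the summands are all copies of $\OO_{\PP^m}$ and having only trivial summands is an open condition on the splitting type, I would deduce $\T_{\FF'} \simeq \OO_{\PP^m}^{\oplus(m-1)}$ as well.

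Next I would set $\g' := H^0(\PP^m, \T_{\FF'})$. Because $\T_{\FF'}$ is trivial of rank $m-1$, this is an $(m-1)$-dimensional subspace of $H^0(\PP^m, \T_{\PP^m})$ whose elements generate $\T_{\FF'}$ at every point of its locally free locus. The integrability of the defining $1$-form $\omega'$ is equivalent to the Frobenius condition $[\T_{\FF'}, \T_{\FF'}] \subseteq \T_{\FF'}$ on the generic locus; taking global sections of the Lie bracket (which makes sense since $\T_{\FF'}$ is trivial and global sections lift uniquely from the locally free locus) yields $[\g', \g'] \subseteq \g'$, so $\g'$ is a Lie subalgebra. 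By construction $\FF(\g')$ and $\FF'$ define the same distribution on a dense open set, hence coincide as codimension one foliations. Continuity of the assignment $\FF' \mapsto \g'$ (e.g.\ via the fact that $H^0$ of a trivial bundle varies continuously in flat families) ensures that $\g'$ is near $\g$.

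For the rigidity consequence, the construction $\g' \mapsto \FF(\g')$ yields a map from the germ at $\g$ of the scheme parametrising $(m-1)$-dimensional Lie subalgebras of $H^0(\PP^m, \T_{\PP^m})$ to the germ at $\FF(\g)$ of $\FF^1(\PP^m, m+1)$. The paragraph above shows this map is locally surjective. If $\g$ is rigid, every nearby subalgebra is of the form $\mathrm{Ad}(\phi)(\g)$ with $\phi\in Aut(\PP^m)$ close to the identity, so every nearby foliation lies in the orbit $Aut(\PP^m)\cdot \FF(\g)$. Thus $\overline{Aut(\PP^m)\cdot \FF(\g)}$ is the unique irreducible component of $\FF^1(\PP^m, m+1)$ passing through $\FF(\g)$.

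The main obstacle I expect is the passage from pointwise involutivity of the distribution $\T_{\FF'}$ to Lie-algebraic involutivity of $\g' = H^0(\T_{\FF'})$: one must verify that $[X,Y]$ of two global sections is again a global section of $\T_{\FF'}$, and not just of some reflexive extension past the singularities of $\FF'$. Triviality of $\T_{\FF'}$ makes this go through because any local section of a trivial sheaf that is defined outside a codimension $\geq 2$ subset extends uniquely globally, but carefully tracking what happens along $\sing(\FF')$ and verifying that the splitting type is truly preserved (and not merely ``generically split'') under deformation is the delicate technical point.
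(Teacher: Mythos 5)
The paper does not actually prove this statement: it is recalled verbatim as ``Theorem 3'' of \cite{fj} (Cukierman--Pereira), so there is no internal proof to compare against. Your argument is, in substance, a correct reconstruction of the original proof in \cite{fj}: split-sheaf stability gives $\T_{\FF'}\simeq\OO_{\PP^m}^{\oplus(m-1)}$, one sets $\g'=H^0(\PP^m,\T_{\FF'})$, involutivity of $\T_{\FF'}$ makes $\g'$ a Lie subalgebra (for $X,Y$ with $i_X\omega'=i_Y\omega'=0$, contracting $\omega'\wedge d\omega'=0$ gives $\omega'\cdot i_Yi_Xd\omega'=0$, hence $i_{[X,Y]}\omega'=-i_Yi_Xd\omega'=0$ by torsion-freeness, so no issue arises along $\sing(\FF')$ since $\T_{\FF'}$ is by definition the saturated kernel of contraction with $\omega'$), and rigidity plus constancy of $h^0(\T_{\FF'})$ finishes the argument. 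The one step you assert rather than prove, and which genuinely needs an argument, is that the \emph{trivial} splitting persists under deformation: splitting types in families of split bundles can jump, so ``openness'' is not automatic from Theorem A of \cite{V} alone. It does hold here because the determinant is rigid, $\det\T_{\FF'}=\OO_{\PP^m}(m+1-d)=\OO_{\PP^m}$ for $d=m+1$, and semicontinuity gives $h^0(\T_{\FF'}(-1))\le h^0(\OO_{\PP^m}^{\oplus(m-1)}(-1))=0$, forcing every summand of $\T_{\FF'}$ to have non-positive degree while the degrees sum to zero; hence all summands are $\OO_{\PP^m}$. With that observation inserted, your proof is complete and coincides with the argument of the cited source.
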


\begin{example}\label{logexample} We shall now give an alternative proof of the stability of \textit{logarithmic} foliations on $\PP^n$ with $m\leq n-2$ divisors. This is, foliations which are induced by a differential $1$-form
\begin{equation}\label{log1} \omega=\sum_{i=0}^m \lambda_i \widehat{F_i} dF_i \in H^0(\PP^n,\Omega^1_{\PP^n}(d)),
\end{equation}
where  $F_i\in H^0(\PP^n,\OO_{\PP^n}(e_i))$, $\lambda_i\in \CC$ are such that $\sum \lambda_i e_i=0$ and $\overline{e}=(e_0,\dots,e_m)$ satisfies $\sum e_i=d$. The corresponding irreducible components of $\FF^1(\PP^n,d)$ are often denoted $\LL og(n,\overline{e})$.

Observe that a generic element $\omega\in\LL og(n,\overline{e})$ is the pullback under corresponding map $F:\PP^n\dashrightarrow \PP^m(\overline{e}/k)$ of degree $k=gcd(\overline{e})$ of the foliation induced by the element
\begin{equation} \label{log2} \alpha=\sum_{i=0}^m \lambda_i \widehat{z_i} dz_i \in \FF^1(\PP^m(\overline{e}/k), d/k).
\end{equation}
Straightforward calculation shows that if $\alpha$ is generic then $d\alpha$ vanishes in codimension $3$. Moreover, its tangent sheaf satisfies $\T_{\alpha}\simeq \g\otimes \OO_{\PP^m(\overline{e}/k)}$ for an abelian subalgebra $\g\subseteq H^0(\PP^m(\overline{e}/k),\T_{\PP^m(\overline{e}/k)})$ generated by elements of the form
\[ \X_r=\sum_{j=0}^m \lambda_{rj} z_j\frac{\partial}{\partial z_j} \]
for some $\lambda_{rj}\in \CC$, see for instance \cite[Section 2.2]{CD06}. In order to apply \cref{elCor}, we are left to verify that these points are generically reduced in $\FF^1(\PP^m(\ov{e}/k,d/k)$. Let us first consider the case where $\ov{e}=(1,\dots,1)$, so that $H^0(\PP^m,\T_{\PP^m})\simeq \mathfrak{gl}_{m+1}/\langle id\rangle$. Under this isomorphism, the elements $\X_r$ correspond to classes of diagonal matrices with coefficients $\lambda_{rj}$. By a classical result on Lie algebras, for a generic $\g$ as above every deformation $\g_\varepsilon$  will also be (after an appropriate change of coordinates) of this form. This is, $\g_\varepsilon$ is in the $Aut(\PP^m)$-orbit of some deformation induced by a perturbation of the $\lambda_{jr}$ or equivalently $\phi_\varepsilon\cdot\g_\varepsilon$ is generated by elements of the form
\[  (\X_j)_\varepsilon=\sum_{r=0}^m (\lambda_{jr})_\varepsilon z_r\frac{\partial}{\partial z_r}  \]
for some $\phi_\varepsilon\in Aut(\PP^m\times \Sigma)$.
In particular, by \cref{teoCP} above the closure of the union of the $Aut(\PP^n)$-orbits of the elements of the form \cref{log2} define a generically reduced irreducible component.

Now suppose that $\ov{e}$ is such that $e_{i+1}=e_i$ or $e_i< e_{i+1}\notin \langle e_0,\dots,e_i\rangle_\NN$.
In this case, one can check that $H^0(\PP^m(\ov{e}/k),\T_{\PP^m(\ov{e}/k)})$ is isomorphic to
$$\bigoplus_{l=1}^s \mathfrak{gl}_{m_l} /\langle e_1 id_1+\cdots+e_s id_s\rangle\subseteq \mathfrak{gl}_{m+1}/\langle e_1 id_1+\cdots+e_s id_s\rangle,$$
where the decomposition is induced by the clusters such that $e_i=e_j$. Arguing as above we can also conclude that in this situation the closure of the union of the $Aut(\PP^m(\ov{e}))$-orbits of the elements as in \cref{log2} define a generically reduced irreducible component of $\FF^1(\PP^m(\ov{e}/k),d/k)$. We are now in position to apply \cref{elCor} in order to obtain:
\begin{corollary}\label{logcorolario} Let $n\geq m+2$ and $\ov{e}=(e_0,\dots,e_m)$ such that either $e_{i+1}=e_i$ or $e_i<e_{i+1}\notin \langle e_0,\dots,e_i\rangle_\NN$. Then the variety
\[
\LL og(n,\ov{e})=\ov{\left\{ \omega=\sum_{i=0}^m \lambda_i \widehat{F_i} dF_i \hspace{0.2cm} : \hspace{0.2cm} \deg(F_i)=e_i \mbox{ and } \sum \lambda_i e_i=0  \right\} }
\]
is an irreducible component of $\FF^1(\PP^n,\sum_{i=0}^m e_i)$.
\end{corollary}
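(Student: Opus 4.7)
The plan is to apply \cref{elCor} to an appropriately chosen component on a weighted projective base. Set $k:=\gcd(e_0,\dots,e_m)$ and $X:=\PP^m(\ov{e}/k)$, and observe that a generic logarithmic form $\omega$ as in \cref{log1} factorises as $\omega=F^*\alpha$, where $F=(F_0,\dots,F_m):\PP^n\dashrightarrow X$ is a rational map of degree $k$ and $\alpha$ is the form of \cref{log2} viewed as an element of $H^0(X,\widehat{\Omega}^1_X(d/k))$ with $d=\sum e_i$. This identifies $\LL og(n,\ov{e})$ with a set of the shape $PB(n,k,\C)$, provided $\C$ is taken to be the closure of the $Aut(X)$-orbit of $[\alpha]$ inside $\FF^1(X,d/k)$.

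Next I would verify the hypotheses of \cref{elCor} on the base foliation $\alpha$. A direct computation yields $\T_\alpha\simeq \g\otimes_\CC\OO_X$, where $\g\subseteq H^0(X,\T_X)$ is the abelian subalgebra spanned by the diagonal vector fields $\X_r=\sum_j\lambda_{rj}z_j\,\partial/\partial z_j$; in particular $\T_\alpha$ is a direct sum of trivial line bundles, hence split with non-positive splitting. A second direct computation shows that for generic $\lambda_i$ the $2$-form $d\alpha$ vanishes in codimension $\geq 3$. It remains to show that $\C$ is a generically reduced irreducible component of $\FF^1(X,d/k)$, which is the heart of the argument.

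For this last point I would invoke \cref{teoCP}: any first-order deformation of $\alpha$ is induced by a deformation of the Lie subalgebra $\g\subseteq H^0(X,\T_X)$. Under the clustering hypothesis on $\ov{e}$ one checks that
\[ H^0(X,\T_X)\simeq \bigoplus_{l=1}^s\mathfrak{gl}_{m_l}\big/\langle e_1\,\mathrm{id}_1+\cdots+e_s\,\mathrm{id}_s\rangle, \]
and then the classical fact that a generic maximal-diagonal abelian subalgebra of a product of general linear Lie algebras is rigid up to conjugation shows that every such deformation is absorbed by an element of $Aut(X)$. This identifies $T_{[\alpha]}\C$ with the full Zariski tangent space of $\FF^1(X,d/k)$ at $[\alpha]$, giving both that $\C$ is an irreducible component and that it is generically reduced. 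Applying \cref{elCor} with this $\C$ then produces the irreducible component $PB(n,k,\C)=\LL og(n,\ov{e})$ of $\FF^1(\PP^n,d)$.

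The main obstacle is the rigidity step above: in the weighted setting $Aut(X)$ is a non-reductive modification of $\prod GL_{m_l}$ by nontrivial unipotent factors coming from the congruences \cref{equcong}, and one has to confirm that the clustering hypothesis on $\ov{e}$ leaves enough room inside $Aut(X)$ to conjugate away every first-order deformation of $\g$ produced by \cref{teoCP}. The cases $\ov{e}=(1,\dots,1)$ and the general $\ov{e}$ should be handled separately, the second reducing to the first blockwise after using that $e_{i+1}\notin\langle e_0,\dots,e_i\rangle_\NN$ forces the unipotent part of $Aut(X)$ between different blocks to act trivially on the diagonal subalgebra.
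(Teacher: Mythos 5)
Your overall strategy coincides with the paper's: factor a generic logarithmic form as $F^*\alpha$ with $F:\PP^n\dashrightarrow\PP^m(\ov e/k)$ of degree $k=\gcd(\ov e)$, identify $\T_\alpha\simeq\g\otimes_\CC\OO_X$ for the abelian diagonal subalgebra $\g$, check that $d\alpha$ vanishes in codimension $3$, control the deformations of $\alpha$ via \cref{teoCP}, and then feed the resulting component $\C$ into \cref{elCor}. However, there is a genuine error in the key step: you take $\C$ to be the closure of the single $Aut(X)$-orbit of $[\alpha]$ and justify this by asserting that a generic ``maximal-diagonal'' abelian subalgebra is rigid up to conjugation, so that every first-order deformation of $\g$ is absorbed by $Aut(X)$. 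This is false. Here $\g$ is an $(m-1)$-dimensional subspace of the $m$-dimensional maximal torus of $H^0(X,\T_X)$, and it can be deformed nontrivially \emph{inside} the torus by perturbing the residues $\lambda_{rj}$; such deformations change the $\lambda_i$ of the logarithmic form, which are genuine moduli of the foliation (automorphisms can only permute and rescale them). Concretely, for $\ov e=(1,\dots,1)$ the orbit $\overline{Aut(\PP^m)\cdot[\alpha]}$ has codimension $m-1$ inside the actual logarithmic component of $\FF^1(\PP^m,m+1)$, so with your choice of $\C$ the set $PB(n,k,\C)$ is a proper closed subvariety of $\LL og(n,\ov e)$ and is not an irreducible component; the identification $T_{[\alpha]}\C=T_{[\alpha]}\FF^1(X,d/k)$ you claim at the end cannot hold.

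The correct statement, and the one the paper uses, is that every deformation $\g_\varepsilon$ is, \emph{after} an automorphism, again spanned by diagonal vector fields with perturbed eigenvalues; accordingly $\C$ must be taken to be the closure of the union of the $Aut(X)$-orbits of \emph{all} forms $\sum_i\lambda_i\widehat{z_i}\,dz_i$ with $\sum\lambda_ie_i=0$. With that $\C$ the tangent-space computation does close up and \cref{elCor} applies. Your final paragraph correctly isolates the remaining delicate point in the weighted case --- that $Aut(\PP^m(\ov e/k))$ is not reductive and one must check that, under the clustering hypothesis on $\ov e$, $H^0(X,\T_X)$ decomposes as $\bigoplus_l\mathfrak{gl}_{m_l}$ modulo the Euler field so the blockwise argument goes through --- but you leave it as an unresolved ``obstacle'' rather than carrying it out, whereas the paper records precisely this decomposition and reduces to the unweighted argument block by block.
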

\end{example}

We will now consider pullbacks of foliations which are induced by rigid subalgebras $\g\subseteq H^0(\PP^n,\T_{\PP^n})$. In this case, using \cref{teoCP} we can reformulate \cref{elCor} as follows:

\begin{corollary} \label{corLie} Let $n\geq m+2$ and $\g\subseteq H^0(\PP^m,\T_{\PP^m})$ a rigid subalgebra of dimension $m-1$ such that $\exp(\g)$ acts with trivial stabilizers in codimension one. Suppose further that $d\omega(\g)$ vanishes in codimension greater than $2$. Then the variety $PB(n,k,\g)\subseteq \FF^1(\PP^n,k(m+1))$ whose generic element is a pullback of $\omega(\g)$ under a rational map $\PP^n\dashrightarrow \PP^m$ of degree $k$ is an irreducible component of $\FF^1(\PP^n,k(m+1))$.
\end{corollary}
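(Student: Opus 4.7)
The strategy is to reduce the statement to an application of the previously established \cref{elCor}, taking as the base component the closure of the $Aut(\PP^m)$-orbit of $\omega(\g)$ inside $\FF^1(\PP^m,m+1)$. I would first verify that $\omega(\g)$ satisfies the hypotheses of \cref{elTeo} (and hence of \cref{elCor}) as a foliation on $\PP^m=\PP^m(1,\dots,1)$. The hypothesis that $d\omega(\g)$ vanishes in codimension greater than $2$ is given directly, so the only point to check is that $\omega(\g)$ has a split tangent sheaf with non-positive splitting. This is immediate: by construction the tangent sheaf of the foliation induced by the action of $\exp(\g)$ is $\T_{\omega(\g)}\simeq \g\otimes_\CC \OO_{\PP^m}\simeq\OO_{\PP^m}^{\oplus(m-1)}$, so every $\delta_i$ equals $0\in\ZZ\simeq Cl(\PP^m)$, which is non-positive.

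Next I would exhibit a generically reduced irreducible component of $\FF^1(\PP^m,m+1)$ whose generic element is $\omega(\g)$. By \cref{teoCP}, the rigidity hypothesis on $\g$ ensures that $\C:=\overline{Aut(\PP^m)\cdot \FF(\g)}$ is an irreducible component of $\FF^1(\PP^m,m+1)$. Moreover, the same theorem states that every first-order deformation of $\omega(\g)$ inside $\FF^1(\PP^m,m+1)$ arises from a deformation of the subalgebra $\g$, and rigidity of $\g$ means every such deformation is obtained by conjugation by an element of $Aut(\PP^m)$ close to the identity. Therefore the Zariski tangent space $\T_{\omega(\g)}\FF^1(\PP^m,m+1)$ coincides with the tangent space to the orbit $Aut(\PP^m)\cdot \FF(\g)$, which in turn is contained in $\T_{\omega(\g)}\C$; this gives the generic reducedness of $\C$ along a neighborhood of $\omega(\g)$.

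With both conditions verified, an application of \cref{elCor} to $\C$ immediately yields an irreducible component $PB(n,k,\C)\subseteq \FF^1(\PP^n,k(m+1))$ whose generic element is a pullback of a foliation in $\C$ under a rational map $\PP^n\dashrightarrow \PP^m$ of degree $k$. It remains only to identify $PB(n,k,\C)$ with the variety $PB(n,k,\g)$ of the statement. Any element of $\C$ is of the form $\phi^*\omega(\g)$ for some $\phi\in Aut(\PP^m)$, and if $F:\PP^n\dashrightarrow \PP^m$ is a generic rational map of degree $k$, then $F^*(\phi^*\omega(\g))=(\phi\circ F)^*\omega(\g)$ is again a pullback of $\omega(\g)$ by a generic degree-$k$ rational map. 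Hence the two varieties coincide, concluding the proof.

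The main technical obstacle lies in the second step: extracting generic reducedness from the rigidity hypothesis. The statement of \cref{teoCP} is phrased set-theoretically (``for every foliation near $\FF(\g)$ there is a subalgebra near $\g$ with $\FF'=\FF(\g')$''), so one has to trace through the infinitesimal version of this correspondence to identify $\T_{\omega(\g)}\FF^1(\PP^m,m+1)$ with the image of the derivative of the orbit map at the identity of $Aut(\PP^m)$. Once this identification is made, generic reducedness is clear and the remainder of the argument is the formal combination of \cref{elCor} and \cref{teoCP} described above.
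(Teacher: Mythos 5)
Your proposal is correct and follows essentially the same route as the paper, which derives \cref{corLie} by combining \cref{elCor} with \cref{teoCP} (the paper states this combination without spelling out the details). You in fact supply more than the paper does: the verification that the trivial splitting $\T_{\omega(\g)}\simeq\OO_{\PP^m}^{\oplus(m-1)}$ is non-positive, and the discussion of how rigidity yields the generic reducedness hypothesis of \cref{elCor}, are both left implicit in the text.
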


\begin{example}\label{expgeneral} In the case of $\PP^3$, its Lie algebra of global vector fields admits a unique (up to inner automorphisms) embedding $\aff(\CC)\hookrightarrow H^0(\PP^3,\T_{\PP^3})$, where $\aff(\CC)$ is the Lie algebra of affine transformations $\aff(\CC)=\langle X,Y :  [X,Y]=Y \rangle$. This subalgebra satisfies the hypotheses of \cref{teoCP} and therefore defines a rigid irreducible component of $\FF^1(\PP^3,4)$. For $n\geq 3$, its pullback under linear projections $\PP^n\dashrightarrow \PP^3$ are known as the \textit{exceptional components} $\EE(n)$ of the spaces of foliations $\FF^1(\PP^n,4)$. These were first introduced in \cite{celn}. Applying \cref{corLie} to the case $\g=\aff(\CC)\subseteq H^0(\PP^3,\T_{\PP^3})$, we see that for $n\geq 5$ these stable families are included in a bigger family of irreducible components, namely
\[ \EE(n,k)=PB(n,k,\g) \subseteq \FF^1(\PP^n,4k).\]
\end{example}

\begin{example}\label{pullbackgm} More generally, consider the subalgebra $\g(m)\subseteq H^0(\PP^m, \T_{\PP^m})$ of dimension $m-1$ generated by the vector fields
\begin{align*} \X&=\sum_{j=0}^m (m-2i)z_j\frac{\partial}{\partial z_j},\\
Y_r&=\sum_{j=0}^{m-r} z_{j+r}\frac{\partial}{\partial z_j}, \hspace{0.3cm}1\leq r\leq m-1
\end{align*}
with Lie brackets $[\X,Y_r]=-2Y_r$ and $[Y_l,Y_r]=0$. In \cite[Section 6.3]{fj} it is shown that this subalgebra is rigid and its associated foliation satisfies the hypotheses of \cref{teoCP}. In particular, we can apply \cref{corLie} in order to deduce that for $n\geq m+2$ the variety $PB(n,k,\g(m))$ is an irreducible component of $\FF^1(\PP^n,k(m+1))$.
\end{example}

\begin{example}\label{pullbackg67} In \cite[Proposition 6.7]{fj} and its preceding discussion the authors construct two rigid subalgebras $\g_6=\langle \X,Y_1,\dots,Y_4 \rangle \subseteq H^0(\PP^6,\T_{\PP^6})$ and $\g_7=\langle \X',Y_1'\dots,Y_5'\rangle\subseteq H^0(\PP^7,\T_{\PP^7})$ satisfying the hypotheses of \cref{teoCP}. Its Lie algebra structure is given by
\[
[\X,Y_r]=-2rY_r, \hspace{0.2cm} [Y_1,Y_r]=Y_{r+1}
\]
and
\[
[\X',Y'_r]=-2rY'_r, \hspace{0.2cm} [Y'_1,Y'_r]=Y'_{r+1},\hspace{0.2cm} [Y'_2,Y'_3]=-\frac{5}{2}Y'_5
\]
respectively.
Again, \cref{corLie} implies that the varieties
\begin{align*} PB(n,k,\g_6)\subseteq \FF^1(\PP^n,7k),&\hspace{0.1cm} n\geq 8, \mbox{ and} \\
PB(n,k,\g_7)\subseteq \FF^1(\PP^n,8k), &\hspace{0.1cm} n\geq 9
\end{align*}
are irreducible components of the corresponding spaces of foliations.
\end{example}

\section{A word on pullbacks from toric varieties}

One of the inquiries that motivated this work is wether there exist stable families that arise as pullbacks from varieties $Y$ different than (weighted) projective spaces. In order to address this problem, the authors first considered the case where $Y$ is a normal toric variety. In this short digression we will see that most of the stable families that could arise from such situation are included in the irreducible components presented in this paper.

Let $Y=Y_\Sigma$ the simplicial toric variety of dimension $d$ associated to the fan $\Sigma$ in $\mathbb{R}^d$ and let $\{ v_0=(v_1^0,\dots,v_d^0),\dots,v_m=(v_1^m,\dots,v_d^m)\}$ be the elements in $\mathbb{Z}^d$ generating its one-dimensional cones. The class group $Cl(Y)$ is generated by the $m+1$ torus invariant irreducible divisors. The main result in \cite[Theorem 2.1]{COX1} establishes that $Y$ admits a description as a quotient
\[ Y \simeq \left( \CC^{m+1}\setminus Z\right) /G,\]
where $Z$ is a union of linear varieties of codimension greater than one and $G=Hom(Cl(Y),\CC^*)$. The action is just the restriction of the diagonal action $(\CC^*)^{m+1}\curvearrowright\CC^{m+1}$ after viewing $G$ as a subgroup of $(\CC^*)^{m+1}$ under the identification
\[
G=\{ (g_0,\dots,g_{m})\in (\CC^*)^{m+1} : \prod_{i=0}^m g_i^{v_j^i}=1 \hspace{0.2cm} \forall 1\leq j \leq d\}\ ,
\]
see \cite[Lemma 5.1.1]{CLS}. In particular, an element $(t^{a_0},\dots,t^{a_{m}})\in (\CC^*)^{m+1}$ lies in $G$ if and only if $a_0v_0+\cdots+a_{m}v_{m}=0$. Let us denote by $\pi_Y:\CC^{m+1}\setminus Z\to Y$ the quotient morphism.

It follows from \cite[Proposition 2.12]{gmv} that a general rational map onto a toric variety $F: \PP^n\dashrightarrow Y$ admits a unique (up to multiplication of elements in $G$) complete polynomial lifting. This is, a map (which will be denoted in the same manner) $F:\CC^{n+1}\setminus\{ 0\} \dashrightarrow  \CC^{m+1}\setminus Z$ making the diagram
\[
\xymatrix{
	\CC^{n+1}-\{0\} \ar[d]^\pi \ar@{-->}[r]^F & \CC^{m+1}-Z\ar[d]^{\pi_Y}\\
	\PP^n \ar@{-->}[r]^{F} &Y}
\]
commutative and such that the open set $Reg(F)\subseteq \PP^n$ where $F$ is defined is exactly
\[
Reg(F) = \PP^n\setminus \pi\left(F^{-1}(Z)\right).
\]
Let us denote by $e_i=\deg(F_i)$ the degrees of the homogeneous polynomials $F_i$ inducing this map and set $k=gcd(\ov{e})$. Observe that by \cite[Remark 2.14]{gmv} the degree $\bar{e}=(e_0,\dots,e_m)$ of the map $F$ is invariant under small perturbations and satisfies $e_0 v_0+\cdots + e_N v_N=0$, \emph{i.e.}, the one-parameter subgroup $\{(t^{e_0/k},\dots,t^{e_0/k}):t\in \CC^* \}\subseteq (\CC^*)^{m+1}$ is contained in $G$.
It follows that the identity $id:\CC^{m+1}\setminus \{0\}  \dashrightarrow \CC^{m+1}\setminus Z$ induces a surjective rational map
\[
\phi_{\ov{e}}: \PP^{m}(\bar{e}/k)  \dashrightarrow Y .
\]
 By construction, the map $F$ factorizes through $\phi_{\ov{e}}$, \emph{i.e.}, the degree $k$ rational map $F_{\ov{e}}: \PP^n \dashrightarrow \PP^{m}(\bar{e}/k)$ induced by the $F_i$'s fits in the commutative diagram
\[
\xymatrix{
	\PP^n \ar@{-->}[d]^{F_e} \ar@{-->}[r]^F & Y\\
	 \PP^m(\bar{e}/k) \ar@{-->}[ru]_{\phi_{\ov{e}}} &}
\]
Let us now fix an isomorphism $Cl(Y)\simeq \ZZ^{m+1-k}\times H$ for a finite abelian group $H$. By duality, this induces an isomorphism
\[
(\CC^*)^{m+1-k}\times Hom(H,\CC^*)\simeq G\subseteq (\CC^*)^{m+1}.
\]
Without loss of generality, we can assume that the one parameter subgroup generated by the first $\CC^*$-factor is of the form $t\mapsto (t^{e_0/k},\dots,t^{e_m/k})$. This is convenient in the sense that the pullback $Cl(Y)\to Cl(\PP^{m}(\bar{e}/k))$ corresponds to the projection onto the first coordinate $\ZZ^{m+1-d}\to \ZZ$. Let us denote by $Y_r$ the smooth locus of $Y$.

\begin{definition} Let $\mathcal{G}$ be a sheaf on $Y$ such that $\mathcal{G}\vert_{Y_r}\simeq \bigoplus_{i=1}^r \OO_Y(\alpha_i)$ for some $\alpha_i=(\alpha^i_1,\dots,\alpha^i_{m+1-d},h)\in Cl(Y)$. We say that $\mathcal{G}$ is non-positive with respect to $e$ if $\alpha^i_1\leq 0$ for every $1\leq i \leq r$.
\end{definition}

 With respect to the pullbacks of foliations under $\phi_{\ov{e}}$ we have the following lemma, whose proof is straightforward.

\begin{lemma} Let $\alpha$ be a foliation on $Y$ with split tangent sheaf $\T_{\alpha}$. The following are equivalent:
\begin{enumerate}
	\item $\mathcal{T}_{\alpha}$ is non-positive with respect to $e$ and $d\pi_Y^*\alpha$ vanishes in codimension greater than $2$.
	\item $\phi_{\ov{e}}^*\alpha$ has non-positive splitting and $d\phi_{\ov{e}}^*\alpha$ vanishes in codimension greater than $2$.
\end{enumerate}
\end{lemma}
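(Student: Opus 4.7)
The plan is to reduce both conditions to statements about the pullback map $\phi_{\ov{e}}^*$ on line bundles and on differential forms, exploiting the factorization $\pi_Y = \phi_{\ov{e}}\circ \pi_{\ov{e}/k}$ (wherever all maps are defined). The main content is that $\phi_{\ov e}$ is a quotient by the complementary part of $G$ (an $(m-d)$-dimensional algebraic group times the finite group $H$), and that under the fixed isomorphism $Cl(Y)\simeq \ZZ^{m+1-d}\times H$ the induced morphism $Cl(Y)\to Cl(\PP^m(\ov{e}/k))\simeq \ZZ$ is precisely projection onto the first coordinate, as set up in the excerpt.

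First, I would treat the splitting condition. Restricting to the common regular open set $V\subseteq \PP^m(\ov{e}/k)_r\cap \phi_{\ov{e}}^{-1}(Y_r)$ (whose complement has codimension $\geq 2$, so extending back to $\PP^m(\ov{e}/k)$ via $j_*$ loses no information), pullback of tangent sheaves for a dominant rational map commutes with the formation of $\T_\alpha$: the sheaf of vector fields annihilating $\phi_{\ov{e}}^*\alpha$ on $V$ agrees with $\phi_{\ov{e}}^*\T_\alpha|_V$. Hence if $\T_\alpha|_{Y_r}\simeq \bigoplus_i \OO_Y(\alpha_i)$ with $\alpha_i=(\alpha_1^i,\dots,\alpha_{m+1-d}^i,h^i)$, then $\T_{\phi_{\ov{e}}^*\alpha}|_V\simeq \bigoplus_i \OO_{\PP^m(\ov{e}/k)}(\alpha_1^i)$, where I use that $\phi_{\ov{e}}^*\OO_Y(\alpha_i)\simeq \OO_{\PP^m(\ov{e}/k)}(\alpha_1^i)$ by the explicit form of $Cl(Y)\to Cl(\PP^m(\ov{e}/k))$. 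The non-positivity of $\T_\alpha$ with respect to $\ov{e}$ is exactly the condition $\alpha_1^i\leq 0$ for all $i$, which is the non-positive splitting of $\phi_{\ov{e}}^*\alpha$.

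For the condition on $d\alpha$, I would pull back everything to the quasi-affine cover. Pullback commutes with $d$, so $d\pi_Y^*\alpha=\pi_{\ov{e}/k}^*\phi_{\ov{e}}^*(d\alpha)=\pi_{\ov{e}/k}^*(d\phi_{\ov{e}}^*\alpha)$, where both equalities hold on the dense open where the various rational maps are regular (and the complement is of codimension $\geq 2$, negligible for the purposes of the codimension bound). The quotient morphism $\pi_{\ov{e}/k}:\CC^{m+1}\setminus\{0\}\dashrightarrow \PP^m(\ov{e}/k)$ is surjective with one-dimensional $\CC^*$-orbits as fibres; consequently, for any closed subset $W\subseteq \PP^m(\ov{e}/k)$ we have $\mathrm{codim}_{\CC^{m+1}\setminus\{0\}}\pi_{\ov{e}/k}^{-1}(W)=\mathrm{codim}_{\PP^m(\ov{e}/k)}W$. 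Applying this to $W=V(d\phi_{\ov{e}}^*\alpha)$ yields the equivalence of the two vanishing-in-codimension conditions.

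I do not expect any genuine obstacle; the only point requiring care is keeping track of the open sets on which the rational map $\phi_{\ov{e}}$ is defined and the various pullbacks are comparisons of honest line bundles, but this is handled by the standing observation that all the complements are of codimension at least $2$ in the normal varieties involved, so the natural reflexive extensions $j_*$ match. Once the pullback of divisor classes is identified with projection onto the first coordinate and the codimension preservation of $\pi_{\ov{e}/k}$ is recorded, both implications of the equivalence follow at once.
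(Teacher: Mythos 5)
The paper itself offers no argument for this lemma (it is introduced with ``whose proof is straightforward''), so there is no written proof to compare against; your proposal has to be judged on its own. The second half of your argument, on the vanishing locus of $d\alpha$, is fine: $\pi_Y^*\alpha=\pi_{\ov{e}/k}^*\phi_{\ov{e}}^*\alpha$ on the common domain of definition, and the $\CC^*$-quotient $\pi_{\ov{e}/k}$ preserves the codimension of (conical) closed sets, so the two codimension conditions match.

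The first half contains a genuine error. You assert that the tangent sheaf of the pullback foliation satisfies $\T_{\phi_{\ov{e}}^*\alpha}|_V\simeq \phi_{\ov{e}}^*\T_\alpha|_V$. This cannot hold: $Y$ has dimension $d$ while $\PP^m(\ov{e}/k)$ has dimension $m\geq d$, so $\phi_{\ov{e}}$ has $(m-d)$-dimensional fibres, $\T_\alpha$ has rank $d-1$, and $\T_{\phi_{\ov{e}}^*\alpha}$ has rank $m-1$. The correct relation is an exact sequence $0\to\T_{\phi_{\ov{e}}}\to\T_{\phi_{\ov{e}}^*\alpha}\to\phi_{\ov{e}}^*\T_\alpha\to 0$ in which the relative tangent sheaf $\T_{\phi_{\ov{e}}}$ is the missing piece. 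To repair the argument you should observe that $\T_{\phi_{\ov{e}}}$ is generated by the fundamental vector fields of the residual subgroup $G/\CC^*$ acting on $\CC^{m+1}$, which are diagonal vector fields $\sum_i a_i x_i\partial/\partial x_i$ of degree $0$ in the $\ov{e}/k$-grading, so $\T_{\phi_{\ov{e}}}\simeq\OO_{\PP^m(\ov{e}/k)}^{\oplus(m-d)}$; that the extension splits because $\ext^1(\OO(\alpha_1^i),\OO)=H^1(\PP^m(\ov{e}/k),\OO(-\alpha_1^i))=0$ (this is also needed to know that $\phi_{\ov{e}}^*\alpha$ has split tangent sheaf at all, which the statement of the lemma presupposes); and that the extra summands, being copies of $\OO$, are non-positive and therefore do not affect the equivalence between ``$\alpha_1^i\leq 0$ for all $i$'' and ``$\phi_{\ov{e}}^*\alpha$ has non-positive splitting.'' Your identification $\phi_{\ov{e}}^*\OO_Y(\alpha_i)\simeq\OO_{\PP^m(\ov{e}/k)}(\alpha_1^i)$ via the projection $Cl(Y)\to Cl(\PP^m(\ov{e}/k))$ is the right ingredient for the horizontal part; it is only the vertical part that your write-up silently discards.
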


As a consequence, it is straightforward that the possible stable families of foliations on $\PP^n$ whose generic element is a pullback of a foliation on $Y$ satisfying the above hypotheses are included in the ones presented in this work. This is,

\begin{proposition} Let $\mathcal{C}\subseteq \FF^1(\PP^n,\ell)$ be an irreducible component whose generic element is a pullback under a rational map of degree $\ov{e}$ of a split foliation on a toric variety $Y$ such that $\mathcal{T}_{\alpha}$ is non-positive with respect to $\ov{e}$ and $d\pi_Y^*\alpha$ vanishes in codimension greater than $2$.
If $n+2$ is greater than the number of irreducible torus-invariant divisors on $Y$, then $\mathcal{C}$ is one of the irreducible components described in \cref{corLie}.
\end{proposition}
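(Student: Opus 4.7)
The strategy is to exploit the factorization $F = \phi_{\ov{e}} \circ F_{\ov{e}}$ established in the discussion preceding the proposition, which expresses any generic element of $\mathcal{C}$ as $F^{*}\alpha = F_{\ov{e}}^{*}\beta$, with $\beta := \phi_{\ov{e}}^{*}\alpha$ a foliation on the weighted projective space $\PP^{m}(\ov{e}/k)$ and $F_{\ov{e}}:\PP^{n} \dashrightarrow \PP^{m}(\ov{e}/k)$ a rational map of degree $k$. In this way the problem is reduced to one about pullbacks from a weighted projective base, exactly the setting handled by the Main Theorem and \cref{elCor}.

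First I would invoke the lemma immediately preceding the proposition: the hypotheses that $\T_{\alpha}$ is non-positive with respect to $\ov{e}$ and that $d\pi_{Y}^{*}\alpha$ vanishes in codimension greater than $2$ translate into the statement that $\beta$ has non-positive splitting and that $d\beta$ vanishes in codimension greater than $2$. The numerical hypothesis relating $n$ to the number of torus-invariant divisors of $Y$ provides the dimensional condition $n \geq m+2$ required by \cref{elTeo}, so all hypotheses of the Main Theorem are in place for the triple $(\PP^{m}(\ov{e}/k), \beta, F_{\ov{e}})$.

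Next, pick an irreducible component $\mathcal{C}_{\beta} \subseteq \FF^{1}(\PP^{m}(\ov{e}/k), \delta_{0})$ containing a generic such $\beta$, where $\delta_{0}$ is the degree satisfying $\ell = k\delta_{0}$. Applying \cref{elCor} yields a pullback component $PB(n,k,\mathcal{C}_{\beta}) \subseteq \FF^{1}(\PP^{n}, \ell)$ whose generic member is the pullback of a generic element of $\mathcal{C}_{\beta}$ under a degree-$k$ map $\PP^{n} \dashrightarrow \PP^{m}(\ov{e}/k)$. Because the factorization of $F$ shows that the generic element of $\mathcal{C}$ has precisely this form, we obtain $\mathcal{C} \subseteq PB(n,k,\mathcal{C}_{\beta})$, and equality follows since both sides are irreducible components of the same quasi-projective scheme. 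Thus $\mathcal{C}$ is one of the pullback components produced by \cref{elCor}.

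The main technical obstacle is verifying generic reducedness of the base component $\mathcal{C}_{\beta}$ on $\PP^{m}(\ov{e}/k)$, a hypothesis required by \cref{elCor} which is not automatic on an arbitrary weighted projective space. I would address it by transporting generic reducedness from the ambient component on $Y$ containing $\alpha$ through the rational map $\phi_{\ov{e}}$, exploiting the fact that $\phi_{\ov{e}}$ is an isomorphism in codimension one and that first-order deformations of $\beta$ lift, via descent along the quotient $\pi_{Y}$, to first-order deformations of the $G$-invariant lift $\pi_{Y}^{*}\alpha$.
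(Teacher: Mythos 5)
Your proposal follows essentially the same route the paper intends: the paper gives no written proof beyond declaring the statement ``straightforward'' after establishing the factorization $F=\phi_{\ov{e}}\circ F_{\ov{e}}$ and the preceding lemma, and your argument is exactly that implicit one --- write the generic element of $\mathcal{C}$ as $F_{\ov{e}}^{*}\beta$ with $\beta=\phi_{\ov{e}}^{*}\alpha$ on $\PP^{m}(\ov{e}/k)$, use the lemma to transfer the non-positive splitting and the codimension condition on $d\beta$, and identify $\mathcal{C}$ with a pullback component from the weighted projective space. (Be aware that the numerical hypothesis as literally stated, $n+2>m+1$, gives only $n\geq m$; you silently upgrade it to the $n\geq m+2$ required by \cref{elTeo}, which is what the statement must intend.)

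One substantive remark on your final paragraph: the generic reducedness of $\mathcal{C}_{\beta}$ is not actually needed, and your proposed way of obtaining it --- transporting reducedness from the component of $\alpha$ on $Y$ through $\phi_{\ov{e}}$ --- would require a generic reducedness hypothesis on that component which the proposition does not make, so as written that step does not close. But the detour is unnecessary: you are \emph{given} that $\mathcal{C}$ is an irreducible component of $\FF^{1}(\PP^{n},\ell)$, and your first two paragraphs already show $\mathcal{C}\subseteq PB(n,k,\mathcal{C}_{\beta})$ with the latter closed and irreducible; maximality of $\mathcal{C}$ then forces $\mathcal{C}=PB(n,k,\mathcal{C}_{\beta})$ without invoking \cref{elCor} at all. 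Generic reducedness enters \cref{elCor} only to prove that the pullback family \emph{is} a component, which is here part of the hypothesis.
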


\section*{Declarations}
\subsection*{Conflict of interests}
On behalf of all authors, the corresponding author states that there is no conflict of interest.


\subsection*{Funding}
The first author was supported by Instituto Nacional de Matemática Pura e Aplicada and Conselho Nacional de Desenvolvimento Científico e Tecnológico, Brazil. This author also acknowledges support from CNPq Projeto Universal 402936/2021-3: Geometria global de conexões, webs e folheacões holomorfas.  The third author was fully supported by Consejo Nacional de Investigaciones Científicas y Técnicas, Argentina. The fourth author was supported by Conselho Nacional de Desenvolvimento Científico e Tecnológico, Brazil, and Consejo Nacional de Investigaciones Científicas y Técnicas, Argentina.

\printbibliography


\

\

\

{\tiny
\noindent
\begin{tabular}{l l}
	Javier Gargiulo Acea$^*$ \hspace{2cm}\null&\textsf{jngargiulo@gmail.com. ORCID 0000-0002-9675-1479.}\\
	Ariel Molinuevo$^\dag$  &\textsf{arielmolinuevo@gmail.com. ORCID 0000-0003-0286-1237.}\\
Federico Quallbrunn$^\ddag$ &\textsf{fquallb@dm.uba.ar.  ORCID 0000-0001-7856-3135.}\\
	Sebasti\'an Velazquez$^\S$  &\textsf{sebastian.velazquez@kcl.ac.uk. ORCID 0000-0001-8199-2600. }\\
\end{tabular}}

\

\

\

{\tiny
\noindent
\begin{tabular}{l l}
	$^*${Departamento de Matemática Aplicada} & $^\dag$Instituto de Matemática\\
	{Rua Prof. Marcos Waldemar de Freitas Reis s/n} &Av. Athos da Silveira Ramos 149 \\
	{Bloco G, Campus do Gragoatá} &Bloco C, Centro de Tecnologia, UFRJ  \\
	{S\~ao Domingos} &Cidade Universitária, Ilha do Fund\~ao \\
	{CEP 24210-201} &CEP 21941-909  \\
	{Niteroi, RJ}  &Rio de Janeiro, RJ \\
	Brasil &Brasil  \\
	&   \\
	&   \\
	$^\ddag$ {Departamento de Matem\'atica}  & \S{King's College London}  \\
	{Pabell\'on I, Ciudad Universitaria}  & {Strand Building} \\
	{FCEyN, UBA}  & {WC2R 2LS}\\
	{CP C1428EGA}  & {London }  \\
	{Buenos Aires}   & {United Kingdom} \\
	{Argentina}    & {}
\end{tabular}}

\end{document}